\newtheorem{Th}{Theorem}[section]
\newtheorem{Def}[Th]{Definition}
\newtheorem{Cor}[Th]{Corollary}
\newtheorem{Prop}[Th]{Proposition}
\newtheorem{Lem}[Th]{Lemma}
\newtheorem{Rem}[Th]{Remark}
\newtheorem{Cl}[Th]{Claim}
\DeclareMathOperator{\Vol}{Vol}
\DeclareMathOperator{\inrad}{inrad}
\DeclareMathOperator{\capacity}{cap}
\newcommand{\laplace}{\Delta}
\newcommand{\eigenf}{\varphi_{\lambda}}
\newcommand{\eigenfl}{\varphi_{\lambda}^{loc}}
\newcommand{\NiceNodal}{\Omega_{\varphi_{\lambda}}^*}
\begin{document}
	
\title{On the lower bound of the inner radius of nodal domains}
\author{Bogdan Georgiev}
\date{}

\maketitle

\begin{abstract}
	
	We discuss the asymptotic lower bound on the inner radius of nodal domains that arise from Laplacian eigenfunctions $ \eigenf $ on a closed Riemannian manifold $ (M,g) $.
	
	First, in the real-analytic case we present an improvement of the currently best known bounds, due to Mangoubi (\cite{Man1}). Furthermore, using recent results of Hezari (\cite{Hezari}, \cite{Hezari2}) we obtain $ \log $-type improvements in the case of negative curvature and improved bounds for $ (M,g) $ possessing an ergodic geodesic flow.
	
	Second, we discuss the relation between the distribution of the $ L^2 $ norm of an eigenfunction $ \eigenf $ and the inner radius of the corresponding nodal domains. 
	In the spirit of \cite{Colding-Minicozzi} and \cite{Jacobson-Mangoubi}, we consider a covering of good cubes and show that, if a nodal domain is sufficiently well covered by good cubes, then its inner radius is large.
	
\end{abstract}

\section{Introduction} \label{sec:Intro}

Let $ M $ be a closed Riemannian manifold of dimension $n \geq 3$ with metric $ g $ and denote by $\eigenf$ an eigenfunction of the Laplacian $\laplace$ of $ M $, corresponding to the eigenvalue $\lambda$. Assume that $ \| \eigenf \|_{L^2} = 1 $. We are interested in the geometry of nodal domains in the high-energy limit, i.e. as $ \lambda \rightarrow \infty $. For a readable and far-reaching survey we refer to \cite{Z} and \cite{Z2}.

By a result of Dan Mangoubi (\cite{Man1}), it is known that for a nodal domain $ \Omega_\lambda $, corresponding to $ \eigenf $, the following asymptotic estimate holds:
\begin{equation} \label{eq:Asymptotic-Bounds-Inner-Radius}
	\frac{c_1}{\lambda^{\frac{n-1}{4}+\frac{1}{2n}}} \leq \inrad(\Omega_\lambda) \leq \frac{c_2}{\sqrt{\lambda}},
\end{equation}
where $ c_{1,2} $ depend on $ (M, g) $.

In particular, the asymptotic estimates are sharp in the case of a Riemannian surface, i.e. the inner radius of a nodal domain is comparable to the wavelength $\frac{1}{\sqrt{\lambda}} $. A natural question is whether the mentioned lower bound is optimal also for higher dimensions.

Our first result concerns an improvement in the real-analytic case.

\begin{Th} \label{thm:Inradius-Real-Analytic}
	Let $ (M,g) $ be a real-analytic closed manifold of dimension at least $ 3 $. Let $ \eigenf $ be an eigenfunction of the Laplace operator $ \Delta $ and $ \Omega_\lambda $ be a nodal domain of $ \eigenf $. Then, there exist constants $ c_1 $ and $ c_2 $ which depend only on $ (M,g) $, such that
	\begin{equation}
		\frac{c_1}{\lambda} \leq \inrad (\Omega_\lambda) \leq \frac{c_2}{\sqrt{\lambda}}
	\end{equation}
	Moreover, if $ \eigenf $ is positive (resp. negative) on $ \Omega_\lambda $, then a ball of this radius can be inscribed within a wavelength distance to a point where $ \eigenf $ achieves its maximum (resp. minimum) on $ \Omega_\lambda $. 
\end{Th}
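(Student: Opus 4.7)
The plan is to localize near a point of extremum of $\eigenf$ on $\Omega_\lambda$, invoke the Donnelly--Fefferman bounds available in the real-analytic setting, and produce the inscribed ball by a volume-based tubular argument around the nodal set $\mathcal{N} := \{\eigenf = 0\}$. The upper bound $c_2/\sqrt{\lambda}$ is classical (Faber--Krahn together with the eigenvalue inequality $\lambda \ge \lambda_1(\Omega_\lambda)$), so the content is the lower bound $c_1/\lambda$.

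Let $x_0 \in \Omega_\lambda$ be a point where $\eigenf$ attains the value $M := \|\eigenf\|_{L^\infty(\Omega_\lambda)}$; after possibly replacing $\eigenf$ by $-\eigenf$, assume $\eigenf(x_0) = M > 0$. Set the wavelength scale $r_0 := c_0/\sqrt{\lambda}$ and write $B_0 := B(x_0, r_0)$. The goal is to produce a point $y \in B_0 \cap \Omega_\lambda$ with $B(y, c_1/\lambda) \subset \Omega_\lambda$, which simultaneously yields the lower bound $c_1/\lambda$ and the wavelength-distance localization asserted by the theorem. The key real-analytic inputs are the Donnelly--Fefferman frequency estimate and the accompanying nodal-volume bound. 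The first asserts that the Almgren frequency (or doubling index) of $\eigenf$ is globally bounded by $N := C\sqrt{\lambda}$, so that on wavelength balls $\eigenf$ behaves like a polynomial of degree $N$; via a Remez--Brudnyi--Ganzburg inequality this yields a definite density of the superlevel set, $|\{x \in B_0 : \eigenf(x) > M/2\}| \ge c_2\, |B_0|$, with $c_2$ independent of $\lambda$. The second gives the local nodal surface bound $\mathcal{H}^{n-1}(\mathcal{N} \cap B_0) \le C\sqrt{\lambda}\, r_0^{n-1}$. To close, I would apply a tube argument: the $\rho$-tube $T_\rho := \{x \in B_0 : d(x, \mathcal{N}) < \rho\}$ has volume at most $C\sqrt{\lambda}\, r_0^{n-1}\, \rho$, so choosing $\rho = c_1/\lambda$ with $c_1$ small enough gives $|T_\rho| < (c_2/2)|B_0|$. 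Hence there exists $y \in \{\eigenf > M/2\} \cap B_0$ with $d(y, \mathcal{N}) \ge c_1/\lambda$. The ball $B(y, c_1/\lambda)$ is connected, disjoint from $\mathcal{N}$, and contains $y$ with $\eigenf(y) > 0$, so it lies in the same component of $\{\eigenf > 0\}$ as $x_0$, namely $\Omega_\lambda$.

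\textbf{Main obstacle.} The subtle step is ensuring that the bulk of the superlevel set $\{\eigenf > M/2\} \cap B_0$ actually belongs to $\Omega_\lambda$ itself rather than leaking into other nearby nodal domains; a Remez-type measure bound controls the volume but not the connectedness. I would handle this by combining the elliptic gradient estimate $|\nabla \eigenf| \le C\sqrt{\lambda}\, \|\eigenf\|_{L^\infty(B_0)}$ with the Donnelly--Fefferman nodal-volume bound: the existence of many components of $\{\eigenf > M/2\}$ inside $B_0$ would force $\mathcal{H}^{n-1}(\mathcal{N} \cap B_0)$ to exceed the real-analytic bound when $\lambda$ is large, giving a contradiction. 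Making this quantitative, and tracking the constants so that the inscribed ball provably sits within a wavelength of the extremum $x_0$, is the principal technical step of the argument.
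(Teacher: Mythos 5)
Your proposal shares the paper's overall template---localize at a point of maximum, invoke Donnelly--Fefferman's real-analytic growth estimate, and rule out the error set---but the specific mechanism (Remez density bound plus a tube argument around the nodal set) is genuinely different from the paper, which instead combines the almost-inscribed-ball estimate of Georgiev--Mukherjee with the Jakobson--Mangoubi decomposition of a rescaled wavelength ball into $1/\lambda$-sized ``good'' cubes. Unfortunately, as written your route has a gap at its central step, and two further points that require substantial work.

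\textbf{The Remez density claim is the main problem.} You assert that a Remez--Brudnyi--Ganzburg inequality gives $|\{\eigenf > M/2\} \cap B_0| \ge c_2 |B_0|$ with $c_2$ independent of $\lambda$, because $\eigenf$ behaves like a polynomial of degree $N = C\sqrt{\lambda}$. But the Remez/Cartan constant degenerates with the degree: for a degree-$N$ polynomial, the sublevel set $\{|p| < \tfrac{1}{2}\sup|p|\}$ can occupy a fraction $1 - O(1/N)$ of the ball, so the superlevel set density you get is at best $\sim 1/\sqrt{\lambda}$, not a uniform constant. The Donnelly--Fefferman doubling bound gives exponent $C\sqrt{\lambda}$ at \emph{every} scale, including wavelength balls centered at the maximum, and there is no a priori reason the doubling index is $O(1)$ there. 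Moreover, $M = \|\eigenf\|_{L^\infty(\Omega_\lambda)}$ need not equal $\sup_{B_0}|\eigenf|$; the sup over $B_0$ could be achieved in a neighboring nodal domain and be much larger. With a $\lambda$-dependent $c_2$, the admissible tube radius $\rho$ in your computation drops well below $1/\lambda$ and the theorem is lost. The paper avoids exactly this by not applying Remez on the whole ball $B_0$: it rescales $B_0$ to the unit ball and then subdivides into cubes $Q_\nu$ of rescaled size $\sim 1/\sqrt{\lambda}$ (actual size $\sim 1/\lambda$). Lemma~\ref{lem:Preferred-Cube} (Jakobson--Mangoubi, Prop.~4.1) shows that \emph{most} of these sub-cubes are good, i.e.\ $\eigenfl$ is comparable to its average there; the bad behavior is confined to a small exceptional set $E_\epsilon$. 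In parallel, the density estimate comes not from Remez at all but from the Georgiev--Mukherjee volume decay (\ref{eq:Large-Volume-near-Max-Point}), which bounds $|B_0 \setminus \Omega_\lambda|/|B_0|$ directly.

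\textbf{Two further issues.} (i) Your tube bound $|T_\rho| \le C\,\mathcal{H}^{n-1}(\mathcal{N} \cap B_0)\,\rho$ is not automatic for a set with bounded $(n-1)$-measure; one needs nodal-set regularity at scale $\rho$, which in the real-analytic setting is precisely the content of the Jakobson--Mangoubi tubular neighborhood estimates---so this must be cited and verified at the local wavelength scale, not asserted. (ii) The connectedness step is not closed: finding $y$ with $\eigenf(y) > M/2$ and $d(y,\mathcal{N}) \ge \rho$ does \emph{not} put $y$ in $\Omega_\lambda$, only in some positive nodal domain; nothing forces $y$ to be reachable from $x_0$ inside $\{\eigenf>0\}$. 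Your suggested fix (many components would force too much nodal surface) is heuristic and does not rule out a single large competing component. The paper sidesteps this entirely: the Georgiev--Mukherjee estimate bounds the measure of $B_0 \setminus \Omega_\lambda$ (not the sublevel set), so once one exhibits a small cube $\tfrac{1}{2}Q_\nu$ missed by the error set $S = B_1 \setminus \Omega_\lambda$, that cube is automatically in $\Omega_\lambda$---no connectedness argument is needed.
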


We note that Theorem \ref{thm:Inradius-Real-Analytic} improves Mangoubi's estimates for dimensions $ n \geq 5 $.

The argument consists of two ingredients.

First, we observe that one can almost inscribe a wavelength ball in the nodal domain up to a small in volume error set. In fact, a well-known result due to Lieb (\cite{L}) states that for arbitrary domains $ \Omega $ in $ \mathbb{R}^n $ one can find almost inscribed balls of radius $ \frac{1}{\sqrt{\lambda_1(\Omega)}} $. Furthermore, we refer to \cite{MS} for a result in this spirit stated in terms of capacities.

In \cite{Georgiev-Mukherjee} we were able to obtain a refinement of Lieb's result, specifying the location where a ball of wavelength size can almost be inscribed, as well as the way the error set grows in volume nearby. In particular, wavelength balls can almost be inscribed at points where $ \eigenf $ achieves $ \| \eigenf \|_{L^\infty(\Omega)} $.

Second, one would like to somehow rule out the error set that may enter in the almost inscribed ball near a point of maximum $ x_0 \in \Omega_\lambda $. One way to argue is as follows. Being in the real-analytic setting, eigenfunctions resemble polynomials of degree $ \sqrt{\lambda} $. This observation was utilized in the works of Donnelly-Fefferman (\cite{DF2}) and Jakobson-Mangoubi (\cite{Jacobson-Mangoubi}). What is more, if one takes the unit cube and subdivide it into wavelength-sized small cubes, then these polynomials will be close to their average on most of the small cubes. This implies that the growth of eigenfunctions is controlled on most wavelength-smaller cubes. Now, roughly speaking, we start from a wavelength cube at $ x_0 $ and rescale to the unit cube $ I^n $. Further, $ I^n $ is subdivided into wavelength cubes $ Q_\nu $ and hence most of them will be good. But, if the error set intersects each $ Q_\nu $ deeply it will gain sufficient volume to contradict the volume decay of the first step.
This means that there is a $ Q_\nu $ which is not deeply intersected by the error set.

Moreover, utilizing some recent results of Hezari (\cite{Hezari}) we get that, if one assumes in addition that $ (M,g) $ is negatively curved, then the inradius improves by a factor of $ \log \lambda $. A similar argument works also for $ (M,g) $ with ergodic geodesic flow.

In this note, we also investigate the effect of the moderate growth of $ \eigenf $ on a nodal domain's inner radius. To this end, we exploit a covering by good/bad cubes, inspired by \cite{Colding-Minicozzi} and \cite{Jacobson-Mangoubi}. 	Let us fix a finite atlas $(U_i, \phi_i)$ of $ M $, such that the transition maps are bounded in $C^1$-norm and the metric on each chart domain $U_i$ is comparable to the Euclidean metric in $\mathbb{R}^n$:
\begin{equation} \label{eq:Metric-Comparability}
	\frac{1}{4} e_i \leq g \leq 4 e_i,
\end{equation}
	where we have denoted $ e_i := \phi_i^{*}e $ with $ e $ being the standard Euclidean metric.
	
	We can arrange that $M$ is covered by cubes $K_i \subseteq U_i $, where we decompose $K_i$ into small cubes $K_{ij}$ of size $h$ (to be determined later). Throughout we will denote by $\delta K_{ij}$ the concentric cube, scaled by some fixed scaling factor $ \delta > 1 $.

\begin{Def} \label{def:Good-Bad-Cubes}
	A cube $K_{ij}$ is called $\gamma$-good, if
	\begin{equation}
		\int_{\delta K_{ij}} \eigenf^2 \leq \gamma \int_{K_{ij}} \eigenf^2.
	\end{equation}
	Otherwise, we say that $K_{ij}$ is $\gamma$-bad. We also denote by $\Gamma$ the union of all good cubes (i.e. the good set) and $\Xi := M \backslash \Gamma $.

\end{Def}

We have the following
\begin{Th} \label{th:Main-Theorem}
	
	Let $ (M,g) $ be a smooth closed Riemannian manifold of dimension at least $ 3 $.
	Let $ \Omega_\lambda $ be a fixed nodal domain of the eigenfunction $ \eigenf $.
	
	Then
	\begin{equation}
		\inrad(\Omega_\lambda) \geq C \gamma^{\frac{2-n}{n}} \tau^{\frac{1}{2}} \lambda^{-\frac{1}{2}},
	\end{equation}
	where $ \tau := \int_{\Omega_\lambda \cap \Gamma} \space \eigenf^2 / \int_{\Omega_\lambda} \eigenf^2 $ and  $ C = C(M,g) $.

\end{Th}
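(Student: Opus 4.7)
The plan is to leverage the $L^2$-mass hypothesis to locate a point of $\Omega_\lambda$ lying in a good cube at which the eigenfunction is pointwise large, and then inflate that point into an inscribed ball via an elliptic gradient bound. Fix throughout the small-cube side length at the wavelength scale $h = c\lambda^{-1/2}$, and assume without loss of generality that $\eigenf > 0$ on $\Omega_\lambda$.

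\textbf{Local estimates on good cubes.} Since $\Delta(\eigenf^2) = 2|\nabla\eigenf|^2 - 2\lambda\eigenf^2 \geq -2\lambda\eigenf^2$, the function $\eigenf^2$ is a non-negative subsolution of a uniformly elliptic equation whose coefficients are of order one after rescaling to unit size. Moser iteration on the concentric enlarged cube $\delta K_{ij}$, combined with the good-cube doubling hypothesis, yields
\begin{equation*}
\sup_{K_{ij}}\eigenf^2 \leq \frac{C\gamma}{|K_{ij}|}\int_{K_{ij}}\eigenf^2, \qquad \sup_{K_{ij}}|\nabla\eigenf|^2 \leq \frac{C\gamma\lambda}{|K_{ij}|}\int_{K_{ij}}\eigenf^2,
\end{equation*}
the gradient estimate coming from standard interior regularity for $\Delta\eigenf = -\lambda\eigenf$ at wavelength scale. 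Consequently, for any $x \in \Omega_\lambda$ inside a good cube $K$, the ball $B(x,\, \eigenf(x)/\sup_K|\nabla\eigenf|)$ is contained in $\{\eigenf > 0\} = \Omega_\lambda$, so
\begin{equation*}
\inrad(\Omega_\lambda) \geq \frac{\eigenf(x)\sqrt{|K|}}{\sqrt{C\gamma\lambda\, \|\eigenf\|_{L^2(K)}^2}}.
\end{equation*}

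\textbf{Locating the concentrated point.} The task reduces to producing $x^* \in \Omega_\lambda \cap \Gamma$ in some good cube $K^*$ with
\begin{equation*}
\eigenf(x^*)^2 \geq c\, \gamma^{(4-n)/n}\, \tau \cdot |K^*|^{-1}\int_{K^*}\eigenf^2,
\end{equation*}
since substituting this into the previous display yields precisely the claimed bound $\inrad \geq C\gamma^{(2-n)/n}\tau^{1/2}\lambda^{-1/2}$. For this, I would apply the Sobolev inequality to $\eigenf \in H^1_0(\Omega_\lambda)$, extended by zero outside $\Omega_\lambda$: this gives $\|\eigenf\|_{L^{2^*}(\Omega_\lambda)}^2 \leq C\lambda\int_{\Omega_\lambda}\eigenf^2$ with $2^* = 2n/(n-2)$, and coupling with H\"older on the good portion yields
\begin{equation*}
\tau \int_{\Omega_\lambda}\eigenf^2 = \int_{\Omega_\lambda \cap \Gamma}\eigenf^2 \leq \|\eigenf\|_{L^{2^*}(\Omega_\lambda \cap \Gamma)}^2 \, |\Omega_\lambda \cap \Gamma|^{2/n}.
\end{equation*}
A pigeonhole over the good cubes intersecting $\Omega_\lambda$, weighted by the $L^2$-density of $\eigenf^2$, should then single out the desired $K^*$; the conjugate exponent $n/2$ from H\"older combines with the $\gamma^{1/2}$ factor from the good-cube $L^\infty$-$L^2$ bound to produce the $\gamma^{(2-n)/n}$ power in the final estimate.

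\textbf{Main obstacle.} The technical crux is the exponent bookkeeping in this interpolation/pigeonhole step: one must combine the good-cube $L^\infty$-$L^2$ bound with constant $(\gamma/|K|)^{1/2}$, the Sobolev embedding on $\Omega_\lambda$, and H\"older on $\Omega_\lambda \cap \Gamma$ in a way that preserves scale-invariance under $L^2$-normalization and extracts precisely the exponent $1/2$ on $\tau$ (dictated by the quadratic dependence of the Dirichlet energy on $\eigenf$) and $(2-n)/n$ on $\gamma$ (dictated by the critical Sobolev conjugate exponent). The conceptual reason this should work is that $2^* = 2n/(n-2)$ is the unique $L^p$ space in which Sobolev saturates for compactly supported functions, and its H\"older conjugate is exactly what converts the doubling-constant exponent to the claimed power.
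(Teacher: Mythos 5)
Your strategy --- Moser iteration to bound $\sup_K \eigenf^2$ and $\sup_K|\nabla\eigenf|^2$ on a good cube, followed by inflating a point of $\Omega_\lambda$ into an inscribed ball via the gradient bound --- is genuinely different from the paper's, which is built around (i) a selection argument (Claim 4.1) locating a \emph{good} cube whose local Rayleigh quotient is at most $\kappa_\delta \lambda / \tau$, (ii) a quantitative local asymmetry estimate (Proposition 4.2, via the Colding--Minicozzi mean-value lemma) bounding below the volume of $\{\eigenf>0\}$ in that good cube by $\gtrsim \gamma^{-2}$, and (iii) a Maz'ya capacity/Poincar\'e lower bound for the local Rayleigh quotient in terms of the volume of the complement. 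In the paper, the power $\gamma^{(2-n)/n}$ arises because the capacity estimate raises the asymmetry bound $\gamma^{-2}$ to the exponent $(n-2)/n$, and $\tau^{1/2}$ comes from the Rayleigh quotient selection; neither mechanism is present in your scheme, so the fact that your exponent bookkeeping is self-consistent is not in itself evidence that the route closes.

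The gap is exactly where you flag it, but it is a real obstruction, not just bookkeeping. The Sobolev plus H\"older chain you write out only produces a \emph{volume} lower bound, $|\Omega_\lambda \cap \Gamma| \gtrsim (\tau/\lambda)^{n/2}$; this is simply a localized Faber--Krahn-type statement and cannot by itself be converted into a pointwise height bound $\eigenf(x^*)^2 \geq c\,\gamma^{(4-n)/n}\tau\,|K^*|^{-1}\int_{K^*}\eigenf^2$, because nothing in it controls how the $L^2$-mass on a cube splits between $\{\eigenf>0\}$ and $\{\eigenf<0\}$ --- which is precisely the asymmetry phenomenon the paper has to grapple with. If one feeds the natural additional input (the Colding--Minicozzi mean-value inequality, Lemma 4.3) into your framework, the output is $\sup_{\Omega_\lambda\cap K}\eigenf \gtrsim \gamma^{-1/2}\bigl(|K|^{-1}\int_K\eigenf^2\bigr)^{1/2}$ on a good cube where $\eigenf$ vanishes, and dividing by your gradient bound then yields only $\inrad(\Omega_\lambda)\gtrsim \gamma^{-1}\lambda^{-1/2}$, a strictly worse power of $\gamma$ than $\gamma^{(2-n)/n}$ for every $n\geq 3$, and with no $\tau$-dependence at all. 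Finally, even granting a concentrated point, you have not shown that the cube containing it is $\gamma$-good and meets $\Omega_\lambda$ with $\eigenf$ vanishing nearby --- all of which your Moser and gradient bounds require --- and producing such a cube is itself the content of the paper's Claim 4.1, which you would need an analogue of. So the gradient-inflation approach as described does not close, and a literal completion of it appears to give a weaker theorem than the one stated.
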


Roughly, Theorem \ref{th:Main-Theorem} implies that, if the bulk of the $ L^2 $ norm over the nodal domain is contained in good cubes, then the nodal domain possesses large inner radius.

In Section \ref{sec:Comments} we deduce the following corollaries:

\begin{Cor} \label{cor:Energy-Estimate}
	Let $ (M,g) $ be a smooth closed Riemannian manifold of dimension at least $ 3 $.
	For a nodal domain $ \Omega_\lambda $ of $ \eigenf $, one has
	\begin{equation}
		\inrad(\Omega_\lambda) \geq C \frac{\| \eigenf \|_{L^2(\Omega_\lambda)}^{\frac{2(n-2)}{n}}}{\sqrt{\lambda}},
	\end{equation}
	with $ C = C(M, g)$.
\end{Cor}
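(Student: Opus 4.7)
The plan is to deduce Corollary \ref{cor:Energy-Estimate} from Theorem \ref{th:Main-Theorem} by tuning the free parameter $\gamma$ in the good/bad cube decomposition. Since $n \geq 3$, the factor $\gamma^{(2-n)/n}$ in Theorem \ref{th:Main-Theorem} is decreasing in $\gamma$, while $\tau$, the fraction of $L^2$-mass of $\eigenf$ on $\Omega_\lambda \cap \Gamma$, is non-decreasing in $\gamma$. I would therefore choose $\gamma$ as small as possible while still retaining a definite portion of the $L^2$-mass on good cubes.

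First I would bound the $L^2$-mass carried by the bad set $\Xi$. By Definition \ref{def:Good-Bad-Cubes}, every bad cube satisfies $\int_{K_{ij}} \eigenf^2 < \gamma^{-1} \int_{\delta K_{ij}} \eigenf^2$. Summing over bad cubes and exploiting the bounded overlap multiplicity $\kappa_\delta$ of the dilated family $\{\delta K_{ij}\}$ (a purely geometric constant coming from the finite atlas and (\ref{eq:Metric-Comparability})), combined with $\|\eigenf\|_{L^2(M)} = 1$, I obtain
\begin{equation*}
	\int_{\Xi} \eigenf^2 \leq \frac{\kappa_\delta}{\gamma}, \qquad \text{hence} \qquad \tau \geq 1 - \frac{\kappa_\delta}{\gamma \,\|\eigenf\|_{L^2(\Omega_\lambda)}^2}.
\end{equation*}
Setting $\gamma := 2\kappa_\delta / \|\eigenf\|_{L^2(\Omega_\lambda)}^2$ then forces $\tau \geq 1/2$. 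Plugging this choice into Theorem \ref{th:Main-Theorem} yields
\begin{equation*}
	\inrad(\Omega_\lambda) \geq C \left( \frac{2\kappa_\delta}{\|\eigenf\|_{L^2(\Omega_\lambda)}^2} \right)^{\frac{2-n}{n}} \cdot \frac{1}{\sqrt{2}} \cdot \lambda^{-1/2} = C' \, \|\eigenf\|_{L^2(\Omega_\lambda)}^{\frac{2(n-2)}{n}} \, \lambda^{-1/2},
\end{equation*}
which is the claimed estimate.

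The main conceptual point is recognizing the right tradeoff between the two opposing monotonicities in $\gamma$; the main technical point is the bounded-overlap estimate $\int_{\Xi} \eigenf^2 \leq \kappa_\delta / \gamma$, but this is a standard covering-lemma consequence of the bounded geometry assumption (\ref{eq:Metric-Comparability}), and the resulting multiplicative constant $(2\kappa_\delta)^{(2-n)/n}/\sqrt{2}$ is absorbed into $C = C(M,g)$. No further ingredients beyond Theorem \ref{th:Main-Theorem} are needed.
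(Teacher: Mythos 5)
Your argument is correct and is essentially identical to the paper's own proof: both choose $\gamma$ proportional to $\kappa_\delta / \|\eigenf\|_{L^2(\Omega_\lambda)}^2$ (you use factor $2$, the paper uses $4$), both invoke the bounded-overlap bound $\int_\Xi \eigenf^2 \leq \kappa_\delta/\gamma$ from Lemma~\ref{lem:L2-norm-lower-bound} to guarantee $\tau$ is bounded below by a universal constant, and both then read off the conclusion from Theorem~\ref{th:Main-Theorem}. The only cosmetic difference is the explicit numeric constant, which is absorbed into $C(M,g)$.
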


Note that the inequality in Corollary \ref{cor:Energy-Estimate} is useful only in dimensions $ 3 $ and $ 4 $, as an application of the standard H\"older inequality gives:
\begin{equation}
	\inrad(\Omega_\lambda) \geq C \frac{\| \eigenf \|_{L^2(\Omega_\lambda)}}{\sqrt{\lambda}},
\end{equation}
which is sharper in higher dimensions.

Moreover, we note that as a by-product we obtain

\begin{Cor} \label{cor:Fat-Nodal-Domain}
	Let $ (M,g) $ be a smooth closed Riemannian manifold of dimension at least $ 3 $.
	There exists a nodal domain of $\eigenf$, denoted by $\NiceNodal$, such that
	\begin{equation*}
		\inrad(\NiceNodal) \asymp \frac{1}{\sqrt{\lambda}},
	\end{equation*}
	In other words, there exist constants $ C_1, C_2 $, depending on $ (M, g) $, such that
	\begin{equation*}
		\frac{C_1}{\sqrt{\lambda}} \leq \inrad(\NiceNodal) \leq \frac{C_2}{\sqrt{\lambda}}
	\end{equation*}
	for $ \lambda $ large enough.
\end{Cor}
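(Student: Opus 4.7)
The upper bound $\inrad(\Omega^*_{\varphi_\lambda}) \leq C_2/\sqrt{\lambda}$ is immediate from \eqref{eq:Asymptotic-Bounds-Inner-Radius}, since it holds for every nodal domain. So the content of the corollary is the existence of a single nodal domain achieving the matching lower bound. My plan is to deduce this from Theorem~\ref{th:Main-Theorem} by a pigeonhole-style argument: fix the parameter $\gamma$ at a value depending only on $(M,g)$, show that the $L^2$-mass carried by the bad set $\Xi$ is then a definite fraction less than $1$, and finally isolate a nodal domain that inherits this property through its $\tau$-ratio.

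The first step is to control $\int_\Xi \eigenf^2$. Since the small cubes $K_{ij}$ tile (neighborhoods covering) $M$ and the dilated cubes $\delta K_{ij}$ have overlap multiplicity bounded by some $N = N(\delta, n)$, the defining inequality of bad cubes and $\|\eigenf\|_{L^2(M)}=1$ give
\begin{equation*}
\int_{\Xi} \eigenf^2 \;=\; \sum_{K_{ij}\ \text{bad}} \int_{K_{ij}} \eigenf^2 \;\leq\; \frac{1}{\gamma}\sum_{K_{ij}\ \text{bad}} \int_{\delta K_{ij}} \eigenf^2 \;\leq\; \frac{N}{\gamma}.
\end{equation*}
Fix $\gamma = 4N$; this is a constant depending only on $(M,g)$, and we obtain $\int_{\Xi}\eigenf^2 \leq 1/4$.

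Now partition $M$ into the nodal domains $\{\Omega_j\}$. Call $\Omega_j$ \emph{generic} if $\tau_j \geq 1/2$. If $\Omega_j$ is not generic, then $\int_{\Omega_j \cap \Xi}\eigenf^2 > \tfrac{1}{2}\int_{\Omega_j}\eigenf^2$, so summing over non-generic domains,
\begin{equation*}
\sum_{j\ \text{not generic}} \int_{\Omega_j} \eigenf^2 \;\leq\; 2 \sum_{j\ \text{not generic}} \int_{\Omega_j \cap \Xi} \eigenf^2 \;\leq\; 2 \int_{\Xi} \eigenf^2 \;\leq\; \frac{1}{2}.
\end{equation*}
Since $\sum_j \int_{\Omega_j}\eigenf^2 = 1$, the generic nodal domains collectively carry at least half the $L^2$ mass, and in particular at least one generic nodal domain $\Omega^* =: \NiceNodal$ exists. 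Applying Theorem~\ref{th:Main-Theorem} to $\Omega^*$ with $\tau = \tau^* \geq 1/2$ and our fixed $\gamma$,
\begin{equation*}
\inrad(\Omega^*) \;\geq\; C\, \gamma^{\frac{2-n}{n}} (\tau^*)^{1/2}\, \lambda^{-1/2} \;\geq\; c(M,g)\, \lambda^{-1/2},
\end{equation*}
which is the desired lower bound.

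The main thing that needs to be justified carefully, rather than any deep new estimate, is the bounded-overlap claim used in the summation (multiplicity of $\{\delta K_{ij}\}$) together with the fact that the $K_{ij}$ essentially tile $M$; both are built into the construction of the covering from the atlas $(U_i,\phi_i)$ in Section~\ref{sec:Intro}. Minor care is required at chart overlaps and along $\partial K_i$, but this is a routine adjustment of constants and does not affect the structure of the argument.
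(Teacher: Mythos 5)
Your proof is correct and follows essentially the same route as the paper: fix $\gamma$ at a geometric constant so that $\int_\Xi \eigenf^2$ is a definite fraction below $1$ (this is the paper's Lemma~\ref{lem:L2-norm-lower-bound} with $\gamma = 4\kappa_\delta$), then pigeonhole over nodal domains to find one with $\tau$ bounded below by an absolute constant, and finally invoke Theorem~\ref{th:Main-Theorem}. The only cosmetic differences are that the paper phrases the pigeonhole step as a contradiction (Claim~\ref{cl:Nice-Nodal-Domain}, with threshold $\tau \geq 3/4$ rather than your $1/2$) and cites Lemma~\ref{lem:L2-norm-lower-bound} instead of re-deriving the bounded-overlap estimate inline.
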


As communicated by Dan Mangoubi, Corollary \ref{cor:Fat-Nodal-Domain} also follows directly by looking at a point, where $ \eigenf $ achieves its maximum over $ M $, and further using standard elliptic estimates. Indeed, by rescaling we may assume that $ \eigenf(x_0) = \| \eigenf \|_{L^\infty(M)} = 1 $. Elliptic estimates then imply that $ \| \nabla \eigenf \|_{L^\infty(M)} \leq C \sqrt{\lambda} $ which shows that there is a wavelength inscribed ball at $ x_0 $.

We note that the scaling factor $ \delta > 0 $ also enters in the constants above - however, in our discussion it is fixed and later explicitly set as $ \delta := 16\sqrt{n} $ for technical reasons.

The plan for the rest of this note goes as follows.

In Section \ref{sec:Real-Analytic-Case} we provide the details behind the proof of Theorem \ref{thm:Inradius-Real-Analytic}, following the plan outlined above. We end the Section by discussing the case of $ (M,g) $-negatively curved.

In Section \ref{sec:Asymmetry-History} we recall the essential steps behind the lower bound in (\ref{eq:Asymptotic-Bounds-Inner-Radius}).
Roughly speaking, one cuts a nodal domain $\Omega_\lambda$ into small cubes of size $\inrad(\Omega_\lambda)$. Then, among these, one finds a special cube $ K_{i_0 j_0} $, over which the Rayleigh quotient is carefully estimated. Here an essential role is played by how one compares the volumes of $ \{\eigenf > 0 \}$ and $ \{ \eigenf < 0 \}$ in the special cube (also known as asymmetry estimates).

The motivation behind Theorem \ref{th:Main-Theorem} comes from the question, whether in the special cube $ K_{i_0 j_0} $ one has $ \Vol (\{\eigenf > 0 \}) \sim \Vol (\{\eigenf < 0 \})$. Having this would imply that the inner radius is comparable to the wavelength $\frac{1}{\sqrt{\lambda}} $, i.e. the optimal asymptotic bound.

We also introduce a covering by small good and bad cubes, which arises when one investigates the way the local $L^2$-norm of an eigenfunction $\eigenf$ grows (w.r.t. the domain of integration). Good cubes represent places of controlled $L^2$-norm growth. The motivation behind this consideration is the fact that the volumes of the positivity and negativity set of $\eigenf$ in a good cube are comparable, i.e. their ratio is bounded by constants (\cite{Colding-Minicozzi}).

In Section \ref{sec:Proof-Main} we show the statement of Theorem \ref{th:Main-Theorem} and its Corollaries $ \ref{cor:Fat-Nodal-Domain} $ and $ \ref{cor:Energy-Estimate} $.

We end the discussion by making some further comments in Section \ref{sec:Comments}.

\subsection{Acknowledgements}

I am grateful to Henrik Matthiesen, Mayukh Mukherjee, Alexander Logunov, Dan Mangoubi, Hamid Hezari and Steve Zelditch for their encouragement, comments and valuable remarks. I would also like to thank my supervisor Werner Ballmann for the constant support and all the knowledge.

\section{The inradius of nodal domains in the real analytic case} \label{sec:Real-Analytic-Case}

We consider the case of a real analytic manifold $ (M,g) $ of dimension at least $ 3 $.
A main insight in this situation is that polynomials approximate eigenfunctions sufficiently well, i.e. an eigenfunction $ \eigenf $ exhibits a behaviour, which is similar to that of a polynomial of degree $ \sqrt{\lambda} $. We refer to the papers of Donnelly-Fefferman \cite{DF2} and Jakobson-Mangoubi \cite{Jacobson-Mangoubi} for a vivid illustration of this observation.

We now prove Theorem \ref{thm:Inradius-Real-Analytic}.

\begin{proof}[Proof of Theorem \ref{thm:Inradius-Real-Analytic}]
	
	Let us assume without loss of generality that $ \eigenf $ is positive on $ \Omega_\lambda $ and let $ x_0 \in \Omega_\lambda $ be a point where $ \eigenf $ reaches a maximum on $ \Omega_\lambda $. 
	First, an examination of the proof of Theorem $ 1.3 $ of \cite{Georgiev-Mukherjee}, gives
	\begin{equation} \label{eq:Large-Volume-near-Max-Point}
		\frac{\Vol(B_{\frac{r}{\sqrt{\lambda}}} (x_0) \cap \Omega_\lambda ) }{\Vol (B_{\frac{r}{\sqrt{\lambda}}} (x_0))} > 1 - c r^{\frac{2n}{n-2}},
	\end{equation}
	
	where $ r $ is sufficiently small. This quantitatively means that the more we shrink the ball $ B_{\frac{r}{\sqrt{\lambda}}} (x_0) $, the "more inscribed" it becomes having a small error set, outside of the nodal domain $ \Omega_\lambda $. We remark that the existence of such an almost inscribed wavelength ball could also be deduced from the corresponding statement in $ \mathbb{R}^n $  (cf. \cite{L}) and a partition of unity argument. However, the result in \cite{Georgiev-Mukherjee} specifies also the location of such a ball, i.e. at a point $ x_0 $ where $ \phi_\lambda $ achieves a maximum.
	
	We would like to take $r$ sufficiently small (to be determined later), and then rescale the ball $B_{\frac{r}{\sqrt{\lambda}}}$ to the unit ball $B_1$ in  $\mathbb{R}^n$. We will denote the rescaled eigenfunction on the unit ball by $\eigenfl$.
	
	We now recall and adapt results from \cite{Jacobson-Mangoubi}. First, we recall the following (cf. (Proposition 4.1,\cite{Jacobson-Mangoubi}))
	\begin{Lem} \label{lem:Preferred-Cube}
		Let $ (M,g) $ be real-analytic.
		There exists a cube $Q \subseteq B_1$, which does not depend on $\eigenfl$ and $\lambda$, and has the following property: suppose $\delta > 0$ is taken, so that $\delta \leq \frac{C_1}{\sqrt{\lambda}}$. We decompose $Q$ into smaller cubes $Q_\nu$ with sides in the interval $(\delta, 2\delta)$. Then, there exists a subset of $E_\epsilon \subseteq Q$ of measure $| E_\epsilon | \leq C_2 \epsilon \sqrt{\lambda} \delta $, so that
		\begin{equation}
			\frac{1}{C_3(\epsilon)} \leq \frac{(\eigenfl(x))^2}{Av_{(Q_\nu)_x} (\eigenfl)^2} \leq C_3(\epsilon), \quad \forall x \in Q \backslash E_\epsilon,
		\end{equation}
		with $C_3(\epsilon) \rightarrow \infty $ as $\epsilon \rightarrow 0$. The constants $C_1, C_2, C_3$ do not depend on $\eigenf$ and $\lambda$. The notation  $ Av_{{(Q_\nu)}_x} F$ denotes the average of $ F $ over a cube $ Q_\nu $ which contains $ x $.
	\end{Lem}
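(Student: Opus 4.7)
My plan is to exploit the real-analyticity of $(M,g)$, which lets us approximate $\eigenfl$ on a fixed cube by a polynomial of degree $\sim \sqrt{\lambda}$, and then apply a Remez-type measure estimate on each small sub-cube $Q_\nu$. The cube $Q$ itself will be chosen as a fixed sub-cube of $B_1$ (say, a maximal one inscribed in $B_{1/2}$), once and for all, so that its location depends on neither $\eigenfl$ nor $\lambda$.

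First I would produce the polynomial approximation. In the analytic setting $\eigenfl$ extends holomorphically to a definite complex neighborhood of $B_1$, and the Donnelly--Fefferman doubling estimate (see \cite{DF2}) gives a doubling exponent of order $\sqrt{\lambda}$ uniformly over eigenfunctions. Combining these via a Bernstein--Walsh / Cauchy-type truncation yields a polynomial $P$ of degree $N \leq C\sqrt{\lambda}$ with $\|P - \eigenfl\|_{L^\infty(Q)}$ super-polynomially small in $N$. The scale hypothesis $\delta \leq C_1/\sqrt{\lambda}$ is calibrated precisely so that $N\delta$ stays bounded, which is the regime in which $P$ restricted to any small cube $Q_\nu$ is non-degenerate in the Remez sense.

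Next, on each $Q_\nu$ I would apply a Remez-type inequality: for a polynomial of degree $N$ on a cube of side $\sim \delta$, the set
\[
\Bigl\{ x \in Q_\nu : \frac{P(x)^2}{\operatorname{Av}_{Q_\nu}(P^2)} \notin \bigl[ C_3(\epsilon)^{-1}, C_3(\epsilon) \bigr] \Bigr\}
\]
has relative measure at most $C\epsilon$ inside $Q_\nu$, where $C_3(\epsilon)$ blows up as $\epsilon \to 0$ at a rate dictated by the Remez exponent (typically $C_3(\epsilon) \sim \epsilon^{-cN}$). Summing these exceptional sets across all $Q_\nu$ and counting that there are $\sim (1/\delta)^n$ of them yields the claimed bound $|E_\epsilon| \leq C_2 \epsilon \sqrt{\lambda} \delta$ after the dimensional bookkeeping. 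Transferring the comparability from $P$ back to $\eigenfl$ is harmless provided the approximation error is well below $C_3(\epsilon)^{-1}$, which is guaranteed by the super-polynomial decay from the previous paragraph once $\lambda$ is large.

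The main obstacle I anticipate is arranging the polynomial approximation to be simultaneously (i) of degree $\leq C\sqrt{\lambda}$, (ii) quantitatively accurate on the \emph{fixed} cube $Q$, and (iii) valid uniformly in $\eigenfl$. This all hinges on the quantitative Donnelly--Fefferman machinery as employed in \cite{DF2} and \cite{Jacobson-Mangoubi}; once that is in place, the Remez step and the final summation over small cubes are essentially routine applications of classical polynomial inequalities.
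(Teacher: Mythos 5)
The paper does not reprove this lemma from scratch: it is Proposition~4.1 of \cite{Jacobson-Mangoubi}, and the paper's exposition just records the reduction, namely that $\eigenfl$ admits a holomorphic continuation to a fixed complex ball with growth $\sup_{B_2^n}|F|\leq |F(0)|e^{C\sqrt{\lambda}}$ (Lemma~7.1 of \cite{DF2}), after which one invokes Propositions~3.2 and~3.7 of \cite{Jacobson-Mangoubi}, which are statements about holomorphic functions of controlled exponential growth. Those propositions are proved by slicing $Q$ by lines, restricting the holomorphic extension to the resulting one-complex-variable strips, and using Jensen-type zero counting to show that on all but a small exceptional set of lines the restriction has a bounded number of zeros and hence controlled oscillation; this is what produces $E_\epsilon$ with a $\lambda$-\emph{independent} comparison constant $C_3(\epsilon)$.

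Your route --- approximate $\eigenfl$ by a polynomial $P$ of degree $N\sim\sqrt{\lambda}$ on $Q$ and then apply a Remez inequality to $P$ on each $Q_\nu$ --- has a genuine gap at exactly the point where the lemma's uniformity lives. You write that the Remez step yields $C_3(\epsilon)\sim\epsilon^{-cN}$: this is $\lambda$-dependent, whereas the lemma demands $C_3(\epsilon)$ \emph{independent} of $\lambda$. The hoped-for fix, that $N\delta$ bounded makes $P$ ``non-degenerate in the Remez sense'' on each $Q_\nu$, does not hold: degree is scale-invariant, so after rescaling $Q_\nu$ to unit size $P$ is still a polynomial of degree $N\to\infty$, and a degree-$N$ polynomial can oscillate violently on a cube of side $\sim 1/N$ (Chebyshev polynomials near the endpoint are the standard counterexample). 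The whole point of the Jakobson--Mangoubi argument is precisely that on \emph{most} small cubes $\eigenfl$ does behave like a bounded-degree object, and one has to \emph{identify and discard} the cubes where it does not; this discard step is performed via complex-analytic zero counting on one-dimensional slices, not via a per-cube Remez estimate. A secondary issue: your bookkeeping produces $|E_\epsilon|\lesssim\epsilon$ outright, independent of $\delta$, and so misses the sharper $|E_\epsilon|\leq C_2\epsilon\sqrt{\lambda}\,\delta$ form, which in the paper's application is what lets one shrink $\delta$ below $1/\sqrt{\lambda}$ while keeping the exceptional set proportionally small.
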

	Lemma \ref{lem:Preferred-Cube} stems from the corresponding result for holomorphic functions $ F $ (cf. Propositions 3.2 and 3.7, \cite{Jacobson-Mangoubi}), defined on the unit cube $ I^n $ and satisfying the following growth condition:
	\begin{equation} \label{eq:Holomorphic-Growth}
		\sup_{B_2^n} |F| \leq |F(0)|e^{C\sqrt{\lambda}},
	\end{equation}
	where $ B_2^n = B_2 \times \dots \times B_2 \subseteq \mathbb{C}^n $ denotes the $ 2 $-polydisk. 
	To transfer the statement to the local eigenfunction $ \eigenfl $ one observes that $ \eigenfl $ admits a holomorphic continuation on a ball, whose size does not depend on $ \lambda $ (Lemma 7.1, \cite{DF2}). Here one uses the fact that on a wavelength scale $ \eigenfl $ is almost harmonic, i.e. it is a solution to slight perturbation of the standard Laplace equation. In other words, $ \eigenfl $ is holomorphically extended to a ball $ \| z \| \leq \rho $, where $ \rho $ does not depend on $ \lambda, \eigenfl $. Moreover,
	\begin{equation} \label{eq:holo-continuation}
		\sup_{\| z \| \leq \rho} |\eigenfl(z)| \leq C \sup_{B_1} |\eigenfl(x)|, 
	\end{equation}
	where $ C $ again does not depend on $ \lambda, \eigenfl $. In this direction we also remark that, in general, $ \eigenf $ extends analytically to a uniform Grauert tube of $ M $, having radius which is independent of $ \lambda $ - we refer to Sections $ 1 $ and $ 11 $, $ \cite{Z3} $.
	
	We now subdivide the cube $ Q $ into small cubes $ Q_\nu $ for which Lemma \ref{lem:Preferred-Cube} holds.
	\begin{Def}
		$ Q_\nu $ is called $ E_\epsilon $-\textit{good}, if
		\begin{equation}
			\frac{| E_\epsilon \cap Q_\nu |}{|Q_\nu|} < 10^{-2n}\omega_n,
		\end{equation}
		where $ \omega_n $ denotes the volume of the unit ball in $ \mathbb{R}^n $. Otherwise, $ Q_\nu $ is $ E_\epsilon $-\textit{bad}.
	\end{Def}
	
	It turns out that the $ E_\epsilon $-good cubes $ Q_\nu $ are not that different from the good cubes, defined in Section \ref{sec:Asymmetry-History} (cf. also Lemma 5.3, \cite{Jacobson-Mangoubi}). We have
	
	\begin{Lem}\label{lem:Good-Ball}
		Let $ Q_\nu $ be an $ E_\epsilon $-good cube.
		Let $ B \subseteq 2B \subseteq Q_\nu $ be a ball centered somewhere in $ \frac{1}{2} Q_\nu $, whose size is comparable to the size of $ Q_\nu $. Then
		\begin{equation}
			\frac{\int_{2B} (\eigenfl)^2}{\int_B (\eigenfl)^2} \leq \tilde{C_1} C_3(\epsilon),
		\end{equation}
		where $ C_3(\epsilon) $ comes from Lemma \ref{lem:Preferred-Cube} and $ \tilde{C_1} $ depends only on the dimension $ n $.
	\end{Lem}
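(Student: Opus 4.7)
The plan is to combine the two-sided pointwise comparison from Lemma \ref{lem:Preferred-Cube} on the good set $Q_\nu \setminus E_\epsilon$ with the purely geometric observation that an $E_\epsilon$-good cube meets only a small portion of the exceptional set $E_\epsilon$. Since $2B \subseteq Q_\nu$, for every $x \in 2B$ the averaging cube $(Q_\nu)_x$ appearing in Lemma \ref{lem:Preferred-Cube} is simply $Q_\nu$ itself, and the relevant pointwise bound becomes
\begin{equation*}
\frac{1}{C_3(\epsilon)} Av_{Q_\nu}(\eigenfl)^2 \leq (\eigenfl(x))^2 \leq C_3(\epsilon) Av_{Q_\nu}(\eigenfl)^2, \qquad x \in Q_\nu \setminus E_\epsilon.
\end{equation*}

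First I would bound the numerator trivially, using only $2B \subseteq Q_\nu$:
\begin{equation*}
\int_{2B} (\eigenfl)^2 \leq \int_{Q_\nu} (\eigenfl)^2 = |Q_\nu| \, Av_{Q_\nu}(\eigenfl)^2.
\end{equation*}
Then I would lower-bound the denominator by discarding the exceptional set and invoking the lower pointwise estimate,
\begin{equation*}
\int_B (\eigenfl)^2 \geq \int_{B \setminus E_\epsilon} (\eigenfl)^2 \geq \frac{|B \setminus E_\epsilon|}{C_3(\epsilon)} \, Av_{Q_\nu}(\eigenfl)^2.
\end{equation*}
The two averages cancel in the ratio, reducing the statement to the purely geometric lower bound $|B \setminus E_\epsilon| \geq c_n |Q_\nu|$ with $c_n$ depending only on the dimension.

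The only step requiring a genuine check is this last measure inequality, and it is precisely what the numerical factor $10^{-2n}$ in the definition of $E_\epsilon$-good is calibrated for. Since $B$ has radius comparable to the side length of $Q_\nu$, there is a dimensional constant $\alpha_n > 0$ with $|B| \geq \alpha_n |Q_\nu|$; combining this with the goodness hypothesis $|E_\epsilon \cap Q_\nu| < 10^{-2n}\omega_n |Q_\nu|$ and subtracting yields $|B \setminus E_\epsilon| \geq (\alpha_n - 10^{-2n}\omega_n) |Q_\nu|$, which is a positive dimensional constant $c_n$ once the size comparability between $B$ and $Q_\nu$ is fixed. Setting $\tilde{C_1} := 1/c_n$ then delivers
\begin{equation*}
\frac{\int_{2B}(\eigenfl)^2}{\int_B (\eigenfl)^2} \leq \frac{|Q_\nu|}{|B \setminus E_\epsilon|} C_3(\epsilon) \leq \tilde{C_1} C_3(\epsilon).
\end{equation*}
I do not expect any real obstacle: the somewhat artificial-looking numerical factor $10^{-2n}$ was put into the definition exactly so that the arithmetic $\alpha_n - 10^{-2n}\omega_n > 0$ is automatic and no balancing is required.
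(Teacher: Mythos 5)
Your proof is correct, and since the paper does not include its own proof of this lemma (it is stated without argument, with a pointer to Lemma 5.3 of \cite{Jacobson-Mangoubi}), your reconstruction is exactly the intended one: bound the numerator trivially by the average over $Q_\nu$, bound the denominator from below on $B \setminus E_\epsilon$ via the lower pointwise estimate of Lemma \ref{lem:Preferred-Cube}, and observe that the $E_\epsilon$-goodness threshold $10^{-2n}\omega_n$ is calibrated so that $|B\setminus E_\epsilon|$ stays a dimensional fraction of $|Q_\nu|$ once the comparability of $B$ to $Q_\nu$ is pinned down (e.g. radius $\geq$ side$/16$, which the constraints $B\subseteq 2B\subseteq Q_\nu$ with center in $\tfrac12 Q_\nu$ permit and which comfortably exceeds the $10^{-2}\cdot\mathrm{side}$ needed).
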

	
	\begin{Lem}
		The proportion of bad cubes to all cubes is smaller than $ \tilde{C_2} |E_\epsilon| $, where $ \tilde{C_2} $ depends only on the dimension.
	\end{Lem}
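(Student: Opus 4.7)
The plan is a direct volume-counting argument. Let $N$ denote the total number of small cubes $Q_\nu$ in the decomposition of $Q$ and let $N_{\text{bad}}$ denote the number of $E_\epsilon$-bad cubes. Since the $Q_\nu$ partition $Q$ and each has side length in $(\delta, 2\delta)$, one has the two-sided estimate
\begin{equation}
\frac{|Q|}{(2\delta)^n} \leq N \leq \frac{|Q|}{\delta^n}.
\end{equation}
In particular $N \geq 2^{-n}|Q|\delta^{-n}$, where $|Q|$ is a constant depending only on the dimension (coming from Lemma \ref{lem:Preferred-Cube}).

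Next, I would exploit the definition of a bad cube. By construction, if $Q_\nu$ is $E_\epsilon$-bad then
\begin{equation}
|E_\epsilon \cap Q_\nu| \geq 10^{-2n}\omega_n |Q_\nu| \geq 10^{-2n}\omega_n \delta^n.
\end{equation}
Since the cubes are disjoint, summing over the bad cubes yields
\begin{equation}
|E_\epsilon| \geq \sum_{Q_\nu \text{ bad}} |E_\epsilon \cap Q_\nu| \geq N_{\text{bad}} \cdot 10^{-2n} \omega_n \delta^n,
\end{equation}
so that $N_{\text{bad}} \leq 10^{2n}\omega_n^{-1}\delta^{-n} |E_\epsilon|$.

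Combining the two bounds gives
\begin{equation}
\frac{N_{\text{bad}}}{N} \leq \frac{10^{2n}\omega_n^{-1}\delta^{-n}|E_\epsilon|}{2^{-n}|Q|\delta^{-n}} = \frac{(20)^{n}}{\omega_n |Q|}\,|E_\epsilon|,
\end{equation}
and one sets $\tilde{C_2} := (20)^n/(\omega_n |Q|)$, which depends only on $n$. There is essentially no obstacle here; the only thing to be careful about is that the $\delta^n$ factors from the upper bound on $|E_\epsilon\cap Q_\nu|$ per bad cube and from the lower bound on $N$ cancel, which is what makes the estimate uniform in $\delta$.
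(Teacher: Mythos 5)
Your argument is correct and is the standard volume-counting (Chebyshev-type) proof, which is the natural and essentially unique route: the paper states this Lemma without a written proof, and what you give is precisely the intended argument. The key point — that the $\delta^{-n}$ in the bound on $N_{\mathrm{bad}}$ cancels against the $\delta^{-n}$ in the lower bound on $N$, making $\tilde{C_2}$ uniform in $\delta$ (and hence in $\lambda$) — is exactly right, and the disjointness of the $Q_\nu$ is justified since they form a decomposition of $Q$. One small arithmetic slip: in the final simplification you should get
\begin{equation}
\frac{10^{2n}\,\omega_n^{-1}\,\delta^{-n}}{2^{-n}\,|Q|\,\delta^{-n}} = \frac{10^{2n}\cdot 2^n}{\omega_n\,|Q|} = \frac{(200)^n}{\omega_n\,|Q|},
\end{equation}
not $(20)^n/(\omega_n|Q|)$; this of course changes nothing, since the constant still depends only on the dimension (recall $Q$ is a fixed cube from Lemma \ref{lem:Preferred-Cube}, independent of $\eigenfl$ and $\lambda$).
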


	By fixing $ \epsilon $ sufficiently small, the previous two Lemmata imply that we can arrange that $ 90\% $ of the small cubes $ Q_\nu $ to be $ E_\epsilon $-good. Moreover, each $ E_\epsilon $-good cube possesses a comparable inscribed ball, having a fixed growth exponent $ C_\epsilon $.

	Now, let us define the error set (or "spike") $ S := B_1 \backslash \Omega_\lambda $. The volume decay (\ref{eq:Large-Volume-near-Max-Point}) gives
	\begin{equation} \label{eq:Spike-Estimate}
		\frac{|S|}{|B_1|} \leq c r^{\frac{2n}{n-2}}.
	\end{equation}
	Now, suppose that $ S $ intersects each small $ E_\epsilon $-good cube $\frac{1}{2} Q_\nu $. Otherwise, there will be an inscribed cube of radius $ \frac{C}{{\lambda}} $ in the nodal domain $ \Omega_\lambda $ and the claim follows. 
	
	There are two cases:
	\begin{enumerate}
		\item Suppose that in a $ E_\epsilon $-good cube $ Q_\nu $ the nodal set does not intersect $ \frac{1}{2}Q_\nu $. This means that $ \frac{1}{2} Q_\nu \subseteq S $, hence
		\begin{equation}
			\frac{|S \cap Q_\nu|}{|Q_\nu|} \geq \frac{1}{2^n}.
		\end{equation}
		
		\item Suppose that the nodal set intersects $ \frac{1}{2}Q_\nu $. Since $ Q_\nu $ is $ E_\epsilon $-good, we can find a ball $ \tilde{B} $ as in Lemma \ref{lem:Good-Ball}, so that its center lies on the nodal set. Elliptic estimates (cf. Proposition \ref{prop:Controlled-Asymmetry} below and Proposition 5.4, \cite{Jacobson-Mangoubi}) imply that
		\begin{equation}
			\frac{|\{ \eigenfl < 0 \} \cap \tilde{B}|}{|\tilde{B}|} \geq C.
		\end{equation}
		By definition $ \{ \eigenfl < 0 \} \cap \tilde{B} \subseteq S \cap \tilde{B} $, so we get
		\begin{equation}
			\frac{|S \cap Q_\nu|}{|Q_\nu|} \geq C.	
		\end{equation}
		
	\end{enumerate}
	
	Summing up the two cases over all $ E_\epsilon $-good cubes, we see that
	\begin{equation}
		\frac{| S\cap Q|}{|Q|} \geq C.
	\end{equation}

	By using the estimate (\ref{eq:Spike-Estimate}) and selecting $ r $ sufficiently small, we arrive at a contradiction. This means that there is a $ E_\epsilon $-good cube $ Q_\nu $, so that $ \frac{1}{2} Q_\nu \subseteq \Omega_\lambda $
	
\end{proof}

\begin{Rem}
	We note that the statement extends also to points $ x_0 $ at which the eigenfunction almost reaches its maximum on $ \Omega_\lambda $ in the sense, that
	\begin{equation}
		C \eigenf(x_0) \geq \| \eigenf \|_{L^\infty(\Omega_\lambda)},
	\end{equation}
	for some fixed constant $ C > 0 $. In particular, if there are multiple "almost-maximum" points $ x_0 $, there should be an inscribed ball of radius $ \frac{1}{\lambda} $ near each of them.
\end{Rem}

\begin{Rem}
	Let us observe that the estimates essentially depend on the growth of $ \eigenf $ at $ x_0 $. We have used the upper bound $ C\sqrt{\lambda} $ on the doubling exponent in the worst possible scenario as shown by Donnelly-Fefferman. It is believed that $ \eigenf $ rarely exhibits such an extremal growth. If the growth is better, this allows to take larger  cubes $ Q_\nu $ and the bound on the inner radius improves. In particular, a constant growth implies the existence of a wavelength inscribed ball.
\end{Rem}

To conclude this section, let us briefly mention that the case of $ (M, g) $ being a closed smooth Riemannian manifold could be treated by a combinatorial approach. We address these issues in a further note.

\subsection{The quantum ergodic case}

First, we mention some recent results of H. Hezari (\cite{Hezari} and \cite{Hezari2}), addressing quantum ergodic sequences of eigenfunctions. Let us assume that $ (M,g) $ is a closed Riemannian manifold with negative sectional curvature. Let $ ({{\eigenf}_i}) $ be any orthonormal basis of $ L^2(M) $, where $ ({{\eigenf}_i}) $ are eigenfunctions with eigenvalues $ \lambda_i $. Then, for a given $ \epsilon > 0 $, there exists a density-one subsequence $ S_\epsilon $, so that
\begin{equation} \label{eq:Hezari-Improvement}
	a_1 (\log \lambda_j)^{\frac{(n-1)(n-2)}{4n^2} - \epsilon} \lambda_j^{-\frac{1}{2} - \frac{(n-1)(n-2)}{4n}} \leq \inrad(\Omega_\lambda)
\end{equation}

We refer to \cite{Hezari} for further details.

The heart of Hezari's argument lies in observing that growth exponents (i.e. doubling exponents) improve, provided that eigenfunctions equidistribute at small scales (cf. \cite{Hezari2}). More precisely, if we assume that for some small $ r > \frac{1}{\sqrt{\lambda}} $, we have
\begin{equation}
	K_1 r^n \leq \int_{B_r(x)} |\eigenf|^2 \leq K_2 r^n,
\end{equation}
for $ K_1, K_2 $ fixed constants and all geodesic balls $ B_r(x) $,
then
\begin{equation}
	\log \left( \frac{\sup_{B_{2s}(x)} |\eigenf|^2}{\sup_{B_s(x)} |\eigenf|^2} \right) \leq C r \sqrt{\lambda}.
\end{equation}
Here the statement holds for all $ s $ smaller than $ 10r $. In particular, in the negatively curved setting, results of $ \cite{Hezari-Riviere} $ give that $ r $ above could be taken as $ (\log \lambda)^{-k} $ for any $ k \in (0, \frac{1}{2n}) $. Further, plugging the improved growth bound into Mangoubi's argument (outlined in Section \ref{sec:Asymmetry-History}) gives the improved estimate (\ref{eq:Hezari-Improvement}).

Now, let us assume that $ (M,g) $ is real-analytic and negatively curved. Then, Hezari's improved growth bound affects the estimate (\ref{eq:Holomorphic-Growth}) and Lemma \ref{lem:Preferred-Cube}, showing that we may subdivide the preferred cube into smaller cubes $ Q_\nu $ of size $ \frac{(\log \lambda)^k}{\sqrt{\lambda}} $, most of which will be good as prescribed by the Lemma. Continuing the argument in the proof of Theorem \ref{thm:Inradius-Real-Analytic}, we arrive at
\begin{equation}
	\inrad (\Omega_\lambda) \geq \frac{C (\log \lambda)^k}{\lambda},
\end{equation}
where $ k $ could be taken as any number in $ (0, \frac{1}{2n}) $.

\section{Inradius estimates via asymmetry} \label{sec:Asymmetry-History}

From now on we will be considering a closed smooth Riemannian manifold $ (M,g) $.
Let us consider an eigenfunction $ \eigenf $ and an associated nodal domain $\Omega_\lambda \subseteq M$.

First, we recall the main ideas behind the asymptotic lower bound:

\begin{equation}
	\frac{c_1}{\lambda^{\frac{n-1}{4}+\frac{1}{2n}}} \leq \inrad(\Omega_\lambda),
\end{equation}
as outlined in $ \cite{Man1} $.

We denote by $\psi$ the restriction of $ \eigenf $ to $ \Omega_\lambda $, extended by $ 0 $ to $ M $. Then $ \psi $ realizes the first Dirichlet eigenvalue of $\Omega_\lambda$, i.e.
\begin{equation*}
	\frac{\int_{\Omega_\lambda} |\nabla \psi|^2 d\Vol}{\int_{\Omega_\lambda} | \psi|^2 d\Vol} = \lambda_1(\Omega_\lambda) = \lambda.
\end{equation*}

	We may assume that $\phi_i(U_i)$ is a cube $K_i$, whose edges are parallel to the coordinate axes, and we can further cut it into small non-overlapping small cubes $K_{ij} \subseteq K_i$ of appropriately selected side length $h$, comparable to $\inrad(\Omega_\lambda)$.
	
	Having this construction in mind, we define \textit{local Rayleigh quotients}:

\begin{Def}
	A local Rayleigh, associated to the eigenfunction $\psi$ and decomposition $\{ K_{ij} \}$ as above, is the quantity
	\begin{equation}
		R_{ij}(\psi) := \frac{\int_{\phi_i^{-1}(K_{ij})} | \nabla \psi|^2 d\Vol }{\int_{\phi_i^{-1}(K_{ij})} | \psi|^2 d\Vol}.
	\end{equation}
\end{Def}

	The main idea is to find a specific small cube $K_{i_0 j_0}$, so that $R_{i_0 j_0}$ is bounded in a suitable way from above and from below. To be more precise, one has the following:
	
\begin{Prop} \label{prop:Reyleigh-Bounds}
	Having the decomposition $\{ K_{ij} \}$ and eigenfunction $\psi$ as above, there exists a cube $K_{i_0 j_0}$, such that:

\begin{equation} \label{eq:Medium-Rayleigh}
	C \frac{\left( \lambda^{\frac{1-n}{2}} \right)^{1-\frac{2}{n}}}{h^2} \leq R_{i_0 j_0} \leq m \lambda_1(\Omega_\lambda) = m \lambda,
\end{equation}

where $C = C(M,g)$ is a constant and $m$ denotes the number of the charts $\{( U_i, \phi_i) \}$.
\end{Prop}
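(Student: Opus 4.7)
The plan is to establish the two bounds separately and then exhibit a single cube realizing both. For the upper bound I would use the global Rayleigh identity $\int_M|\nabla\psi|^2=\lambda\int_M\psi^2$ together with the fact that the chart cubes $\phi_i^{-1}(K_{ij})$ cover $M$ with multiplicity at most $m$; combined with the comparability (\ref{eq:Metric-Comparability}), this yields
\begin{equation*}
\sum_{i,j}\int_{\phi_i^{-1}(K_{ij})}|\nabla\psi|^2 \;\leq\; m\int_M|\nabla\psi|^2 \;=\; m\lambda\int_M\psi^2 \;\leq\; m\lambda\sum_{i,j}\int_{\phi_i^{-1}(K_{ij})}\psi^2.
\end{equation*}
A pigeonhole over the indices with $\int_{\phi_i^{-1}(K_{ij})}\psi^2>0$ produces at least one cube with $R_{i_0j_0}\leq m\lambda$.

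For the lower bound I would first fix the proportionality constant in $h\sim\inrad(\Omega_\lambda)$ large enough (using that a cube of side $h$ carries an inscribed ball of radius $h/2$ together with the comparability of the metrics) that no closed cube of side $h$ can be inscribed in a chart image of $\Omega_\lambda$. Thus every cube on which $\psi\not\equiv 0$ necessarily meets $\partial\Omega_\lambda$, and Mangoubi's asymmetry estimate (applied, if needed, to an appropriately centered ball inscribed in the cube) forces
\begin{equation*}
\mu_{ij}\;:=\;\frac{|\phi_i^{-1}(K_{ij})\setminus\Omega_\lambda|}{|\phi_i^{-1}(K_{ij})|}\;\geq\;c\,\lambda^{-(n-1)/2},
\end{equation*}
since, assuming $\eigenf>0$ on $\Omega_\lambda$, the set $\{\eigenf<0\}\subseteq \phi_i^{-1}(K_{ij})\setminus\Omega_\lambda$ must occupy at least this volume fraction. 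Because $\psi$ vanishes on $\phi_i^{-1}(K_{ij})\setminus\Omega_\lambda$, a capacitary Poincar\'e inequality of Maz'ya type combined with the sharp lower bound $\capacity(E,2K_{ij})\gtrsim |E|^{(n-2)/n}$ (valid for $n\geq 3$) then upgrades the volume asymmetry to
\begin{equation*}
\int_{\phi_i^{-1}(K_{ij})}\psi^2 \;\leq\; \frac{C\,h^2}{\mu_{ij}^{(n-2)/n}}\int_{\phi_i^{-1}(K_{ij})}|\nabla\psi|^2,
\end{equation*}
whence
\begin{equation*}
R_{ij}\;\geq\;\frac{C\,\mu_{ij}^{(n-2)/n}}{h^2}\;\geq\;\frac{C\bigl(\lambda^{(1-n)/2}\bigr)^{1-2/n}}{h^2},
\end{equation*}
uniformly over all cubes in the class. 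Intersecting with the pigeonhole family from the upper-bound step produces the required $K_{i_0j_0}$.

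The main obstacle is the asymmetry estimate $\mu_{ij}\gtrsim \lambda^{-(n-1)/2}$: this is the genuine deep input, resting on Donnelly-Fefferman-type growth/doubling bounds and Almgren-frequency control of the local geometry of the nodal set near $\partial\Omega_\lambda$. By comparison, the pigeonhole and the capacitary Poincar\'e are routine, but one has to track constants carefully so that the cube produced by averaging indeed lies in the family where the asymmetry applies and, in particular, that the nodal set enters each such cube deeply enough to inscribe the ball on which Mangoubi's estimate is stated; the metric comparability (\ref{eq:Metric-Comparability}) reduces this bookkeeping to the Euclidean setting chart by chart.
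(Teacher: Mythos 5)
Your proposal follows essentially the same route as the paper: a pigeonhole over the cube decomposition yields the upper bound $R_{i_0 j_0}\leq m\lambda$, and the lower bound follows from the Maz'ya capacitary Poincar\'e inequality together with Mangoubi's asymmetry estimate $\Vol(\{\eigenf>0\}\cap K_{i_0 j_0})/\Vol(K_{i_0 j_0})\geq C\lambda^{-(n-1)/2}$. The only structural difference --- you assert the lower bound uniformly over all cubes and then intersect with the pigeonhole family, whereas the paper derives it only for the particular cube produced by the pigeonhole --- is cosmetic once one grants, as you correctly flag, that the nodal set enters the relevant cube deeply enough for the asymmetry estimate to apply.
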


Note that (\ref{eq:Medium-Rayleigh}) leads to the asymptotic lower bound on $ \inrad(\Omega_\lambda) $, because of the choice $ h \sim \inrad(\Omega_\lambda) $.

\begin{proof} [Proof of Proposition 
\ref{prop:Reyleigh-Bounds}]
	First, there exists a cube $K_{i_0 j_0}$, for which the upper bound in (\ref{eq:Medium-Rayleigh}) holds, since otherwise one has a contradiction with the definition of $ \psi $ by summing over all small cubes.
	
	Note that we may assume w.l.o.g. that $ \psi $ is negative on $ \Omega_\lambda $ and then, by definition, one gets
	\begin{equation} \label{eq:Psi-Eigenf}
		\Vol(\{ \psi = 0 \}\cap K_{i_0 j_0}) \geq \Vol(\{ \eigenf>0 \}\cap K_{i_0 j_0}),
	\end{equation}
	
	One now proceeds to bound $ R_{i_0 j_0} $ from below, using consecutively a Poincare inequality and a capacity estimate (Sections 10.1.2 and 2.2.3, respectively, from \cite{Maz}):

	\begin{gather*} \label{eq:Poincare-Inequalities}
		R_{i_0 j_0} \geq C \frac{\capacity_2(\{ \psi = 0 \} \cap K_{i_0 j_0}, 2 K_{i_0 j_0})}{h^n} \geq \tilde{C} \frac{\left( \Vol(\{ \psi = 0 \}\cap K_{i_0 j_0}) \right)^\frac{n-2}{n}}{h^n} \geq \\
		\geq \tilde{C} \frac{\left( \Vol(\{ \eigenf>0 \}\cap K_{i_0 j_0}) \right)^\frac{n-2}{n}}{h^n}.
	\end{gather*}
	with $ C, \tilde{C} $ depending on $ M $.

	Thus, it is natural to search for a lower bound on the volume of the positivity set of $ \eigenf $ contained in $ K_{i_0 j_0} $.
	
	By using maximum principles for elliptic PDE in combination with a growth bound, due to Donnelly-Fefferman (\cite{DF2}), D. Mangoubi showed (\cite{Man1}) that the following asymmetry estimate holds:
	\begin{equation}
		\frac{\Vol(\{ \eigenf>0 \}\cap K_{i_0 j_0})}{\Vol( K_{i_0 j_0})} \geq \frac{C}{\lambda^{\frac{n-1}{2}}}.
	\end{equation}
	Noting that $  \Vol( K_{i_0 j_0})  \sim h^n$ and putting the above chain of inequalities together yields the claim.
\end{proof}

A few comments are in order. First, note that an improvement of the asymmetry of nodal domains leads to a direct improvement of the lower bound for the inner radius. Moreover, it has been suggested (e.g. \cite{NPS}) that the asymmetry of nodal domains may be better than $\frac{C}{\lambda^{(n-1)/2}}$.

Second, in some sense it has been shown (e.g. \cite{DF2}; Lemma $ 4 $ in \cite{Colding-Minicozzi}), that there are many small cubes where the asymmetry of a nodal domain is already far better (in fact asymptotically optimal). In other words, one should expect that on a lot of small cubes $ \Vol (\{\eigenf > 0 \}) \sim \Vol (\{\eigenf < 0 \})$.

These remarks suggest that, in order to improve the lower inradius bound, it would be desirable to exhibit a better asymmetry of $\Omega_\lambda$ in the specific cube $K_{i_0 j_0}$.

\subsection{Good cubes and bad cubes} \label{sec:Cubes}

Let us fix an eigenfunction $\eigenf$ of $\laplace$ with eigenvalue $\lambda$ and $\| \eigenf \|_{L^2(M)} = 1$.

As above we consider the finite atlas $\{ (U_i, \phi_i) \}$ of $ M $, and arrange that $M$ is covered by cubes $K_i \subseteq U_i $, where $K_i$ is decomposed into small cubes $K_{ij}$ of size $h$ (to be determined later). Again denoting by $\delta K_{ij}$ the concentric cube, scaled by some fixed scaling factor $ \delta > 1 $, we may also assume that $\delta K_{ij} \subseteq U_i$.

We note that the metric $g$ is comparable to the Euclidean one on each cube $K_i$ and, moreover, each point $x \in M$ is contained in at most $\kappa_\delta$ of the concentric cubes $\delta K_{ij}$, where $\kappa_\delta$ is some constant, not depending on the chosen cube size $h$.

In the light of Definition \ref{def:Good-Bad-Cubes}, we have that the covering $ K_{ij} $ is divided into good and bad cubes.

First, we note that the covering is robust, in the sense that the good cubes can be arranged to consume most of the $ L^2 $ norm. Essentially, by using the definition one can show:

\begin{Lem} \label{lem:L2-norm-lower-bound}
	We have
	\begin{equation}
		\int_{\Gamma} \eigenf^2 \geq 1 - \frac{\kappa_\delta}{\gamma}.
	\end{equation}
\end{Lem}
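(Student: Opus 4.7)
The plan is to work with the complement $\Xi = M \setminus \Gamma$, i.e.\ the union of $\gamma$-bad cubes, and to show directly that $\int_\Xi \eigenf^2 \leq \kappa_\delta / \gamma$. Since $\|\eigenf\|_{L^2(M)} = 1$, this is equivalent to the stated lower bound.

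First, I would unpack the definition of a $\gamma$-bad cube. By Definition \ref{def:Good-Bad-Cubes}, a bad cube $K_{ij}$ satisfies the reverse inequality $\int_{K_{ij}} \eigenf^2 < \gamma^{-1} \int_{\delta K_{ij}} \eigenf^2$. Since the small cubes $K_{ij}$ partition $M$ (up to a measure-zero boundary set), summing the bad-cube inequality gives
\begin{equation}
\int_\Xi \eigenf^2 = \sum_{K_{ij} \text{ bad}} \int_{K_{ij}} \eigenf^2 \leq \frac{1}{\gamma} \sum_{K_{ij} \text{ bad}} \int_{\delta K_{ij}} \eigenf^2.
\end{equation}

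Second, I would invoke the bounded-overlap property recalled just before the lemma: every $x \in M$ lies in at most $\kappa_\delta$ of the concentric cubes $\delta K_{ij}$. Consequently, for any subcollection of cubes (in particular the bad ones),
\begin{equation}
\sum_{K_{ij} \text{ bad}} \int_{\delta K_{ij}} \eigenf^2 = \int_M \Big(\sum_{K_{ij} \text{ bad}} \mathbf{1}_{\delta K_{ij}}\Big) \eigenf^2 \leq \kappa_\delta \int_M \eigenf^2 = \kappa_\delta.
\end{equation}
Combining these two inequalities yields $\int_\Xi \eigenf^2 \leq \kappa_\delta/\gamma$, and hence $\int_\Gamma \eigenf^2 \geq 1 - \kappa_\delta/\gamma$.

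There is no real obstacle here; the argument is a direct bookkeeping computation. The only subtlety worth pointing out is that the overlap bound must apply uniformly to every sub-family of cubes, which is why the bounded multiplicity is stated for the full collection $\{\delta K_{ij}\}$ and not just for the good subfamily. This is already built into the setup above the lemma, so the proof is immediate.
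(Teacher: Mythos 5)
Your proof is correct and is essentially identical to the paper's: both pass to the complement $\Xi$, sum the reversed bad-cube inequality, and then use the bounded overlap $\kappa_\delta$ of the dilated cubes $\{\delta K_{ij}\}$ together with $\|\eigenf\|_{L^2}=1$. Your remark that the multiplicity bound must hold for an arbitrary subfamily is a fair point of care, but it is already implicit in the paper's one-line chain of inequalities.
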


\begin{proof}
	Using $\| \eigenf \|_{L^2} = 1$, we have
\begin{multline*}
		\int_{\Gamma} \eigenf^2 \geq 1 - \int_{\Xi} \eigenf^2 \geq 1 - \sum_{K_{ij}\text{-bad}}	\int_{K_{ij}} \eigenf^2 \geq 1 - \sum_{K_{ij}\text{-bad}} \frac{1}{\gamma} \int_{\delta K_{ij}} \eigenf^2 \geq \\ \geq 1 - \frac{\kappa_\delta}{\gamma} \int_M \eigenf^2 = 1 - \frac{\kappa_\delta}{\gamma}.\\
\end{multline*}
\end{proof}

Again, we note that without any dependence on $ \lambda $ or the size of the small cubes $h$ one is able to control how big (in $ L^2 $ sense) the good set $ \Gamma $ is.

\section{Proof of Theorem \ref{th:Main-Theorem}}\label{sec:Proof-Main}

We now show how the portion of the $ L^2 $ norm over a nodal domain, occupied by good cubes, gives a lower bound on the inner radius.
Roughly, having a lot of good cubes over a nodal domain increases the chance that $ K_{i_0 j_0} $ is a good one.
As in the method of Mangoubi, sketched in Section \ref{sec:Asymmetry-History}, we find a special small cube and estimate the corresponding Rayleigh quotient in a similar way.
However, we will have the advantage that the special cube is also good, which would lead to optimal asymmetry.

\begin{proof} [Proof of Theorem \ref{th:Main-Theorem}]

\begin{Cl} \label{cl:The-Good-Cube}
	There exists a good cube $K_{i_0 j_0}$, such that
	\begin{equation} \label{eq:The-Good-Cube-Rayleigh}
		R_{\left(\delta K_{i_0 j_0} \right)} (\psi) \leq \frac{{\kappa_\delta}}{\tau} \lambda_1(\Omega_\lambda),
	\end{equation}
	where $ \psi $ is defined as in Section \ref{sec:Asymmetry-History} and $R_{\left(\delta K_{i_0 j_0} \right)} (\psi)$ denotes the local Rayleigh quotient w.r.t. the cube $\delta K_{i_0 j_0}$. As above, $ \kappa_\delta $ denotes the maximal number of cubes $ \delta K_{ij} $ that can intersect at a given point.
\end{Cl}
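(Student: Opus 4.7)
The plan is a standard averaging/pigeonhole argument, combined with the fact that $\psi$ realizes the first Dirichlet eigenvalue of $\Omega_\lambda$ (so $\int_M |\nabla \psi|^2 = \lambda \int_M \psi^2 = \lambda \int_{\Omega_\lambda} \eigenf^2$), and the definition of $\tau$ which controls from below how much $L^2$-mass of $\psi$ sits in $\Gamma$.

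First, I would obtain a global upper bound on the numerators by summing the Dirichlet energy over the enlarged good cubes. Since each point of $M$ lies in at most $\kappa_\delta$ of the $\delta K_{ij}$, we have
\begin{equation*}
\sum_{K_{ij}\ \text{good}} \int_{\delta K_{ij}} |\nabla \psi|^2 \ \leq\ \kappa_\delta \int_M |\nabla \psi|^2 \ =\ \kappa_\delta\, \lambda_1(\Omega_\lambda) \int_{\Omega_\lambda} \eigenf^2.
\end{equation*}

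Next, I would bound the denominators from below using $\tau$. Since $\delta K_{ij} \supseteq K_{ij}$ and the small cubes $K_{ij}$ cover $\Gamma$, and since $\psi = \eigenf \cdot \mathbf{1}_{\Omega_\lambda}$,
\begin{equation*}
\sum_{K_{ij}\ \text{good}} \int_{\delta K_{ij}} \psi^2 \ \geq\ \sum_{K_{ij}\ \text{good}} \int_{K_{ij}} \psi^2 \ \geq\ \int_{\Gamma \cap \Omega_\lambda} \eigenf^2 \ =\ \tau \int_{\Omega_\lambda} \eigenf^2,
\end{equation*}
where the last equality is just the definition of $\tau$.

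Finally, I apply the usual ratio pigeonhole: if every good cube $K_{ij}$ had $R_{(\delta K_{ij})}(\psi) > (\kappa_\delta/\tau)\, \lambda_1(\Omega_\lambda)$, then multiplying by $\int_{\delta K_{ij}} \psi^2$ and summing would yield $\sum \int_{\delta K_{ij}} |\nabla \psi|^2 > (\kappa_\delta/\tau)\,\lambda_1(\Omega_\lambda) \cdot \tau \int_{\Omega_\lambda} \eigenf^2 = \kappa_\delta \lambda_1(\Omega_\lambda) \int_{\Omega_\lambda} \eigenf^2$, contradicting the upper bound above. Hence some good cube $K_{i_0 j_0}$ satisfies \eqref{eq:The-Good-Cube-Rayleigh}.

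There is no real obstacle: the only thing to verify is that the small cubes $K_{ij}$ essentially tile $M$ (so that their union of the good ones captures $\Gamma$ with multiplicity $1$, or at worst a dimensional constant absorbable into $\kappa_\delta$), which is built into the construction in Section \ref{sec:Cubes}. The step to be careful about is making sure the inequality in the denominator really uses only $K_{ij}$ (not $\delta K_{ij}$), so that disjointness of the $K_{ij}$ can be invoked to relate the sum of integrals to $\int_{\Gamma \cap \Omega_\lambda} \eigenf^2$ without overcounting.
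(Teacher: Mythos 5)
Your proposal is correct and is essentially the same pigeonhole argument as the paper's: bound the sum of Dirichlet energies over the $\delta K_{ij}$ from above by $\kappa_\delta$ times the global energy, bound the sum of $L^2$-masses from below by $\tau\int_{\Omega_\lambda}\eigenf^2$ via the disjoint $K_{ij}$ covering $\Gamma$, and derive a contradiction with $\int|\nabla\psi|^2=\lambda_1(\Omega_\lambda)\int\psi^2$ if every good cube had a large local Rayleigh quotient. The paper arranges the same inequalities into a single chain by contradiction, but the content is identical.
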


\begin{proof}
	First, let us denote by $\delta \Gamma$ the union of all good cubes scaled by a factor of $\delta > 1$.
	Assuming the contrary, we get:
	\begin{multline}
	\int_{\Omega_\lambda} |\nabla \psi|^2 \geq \int_{\Omega_\lambda \cap \delta\Gamma} |\nabla \psi|^2 \geq \frac{1}{\kappa_\delta} \sum_{K_{ij}\text{-good}} \int_{\Omega_\lambda \cap \delta K_{ij}} |\nabla \psi|^2  > \\
	> \frac{1}{\tau}\lambda_1(\Omega_\lambda) \sum_{K_{ij}\text{-good}} \int_{\Omega_\lambda \cap \delta K_{ij}} |\psi|^2 \geq \frac{1}{\tau}\lambda_1(\Omega_\lambda)\int_{\Gamma} | \psi|^2 \geq \\ 
	\geq \lambda_1(\Omega_\lambda) \int_{\Omega_\lambda} |\psi|^2.\\
	\end{multline}
	Hence, a contradiction with the definition of $\psi$.
\end{proof}

This means that in the cube $\delta K_{i_0 j_0}$ we have the upper bound from (\ref{eq:Medium-Rayleigh}). However, we have the advantage that $K_{i_0 j_0}$ is $\gamma$-good - this implies that the asymmetry and the geometry of the nodal set is under control.

From now on let us fix $ \delta := 16\sqrt{n} $.
The following proposition is similar to Proposition 1 in \cite{Colding-Minicozzi} and Proposition 5.4 in \cite{Jacobson-Mangoubi}. We supply the technical details for completeness:

\begin{Prop} \label{prop:Controlled-Asymmetry}
	Let $\gamma, \rho > 1$ be given. Then there exists $\Lambda > 0$, such that, if one takes the cube size $r \leq \frac{\rho}{\sqrt{\lambda}}$ for $\lambda \geq \Lambda$ and assumes that $\eigenf$ vanishes somewhere in $\frac{1}{2} K_{i_0 j_0}$, then
	\begin{equation}
		\frac{\Vol(\{\eigenf > 0\} \cap \delta K_{i_0 j_0})}{\Vol(\delta K_{i_0 j_0})} \geq \frac{C}{\gamma^2},
	\end{equation}
	where $C$ depends on $ n, \rho, (M, g)$.
\end{Prop}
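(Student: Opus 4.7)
My plan is to rescale $\delta K_{i_0 j_0}$ to unit size, use the $\gamma$-good condition together with standard interior elliptic estimates to obtain $L^\infty$ and gradient bounds on the rescaled eigenfunction, and then combine these with the vanishing hypothesis via a quantitative strong maximum principle to extract the claimed volume bound.

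To set up, I would pass to chart coordinates on $U_i$, where the metric is comparable to the Euclidean one, and define $\tilde u(y) := \eigenf(\phi_i^{-1}(x_* + r y))$, with $x_*$ the center of $K_{i_0 j_0}$ and $r$ its side length. Since $r \leq \rho/\sqrt{\lambda}$, the function $\tilde u$ solves a uniformly elliptic equation with bounded coefficients and zero-order term $r^2 \lambda \leq \rho^2$ on the rescaled cube $\delta \tilde K$, with estimates depending only on $n, \rho,$ and $(M,g)$. Normalizing $\|\tilde u\|_{L^2(\tilde K)} = 1$, the $\gamma$-goodness gives $\|\tilde u\|_{L^2(\delta \tilde K)}^2 \leq C_0 \gamma$, and standard interior $L^\infty$ and $C^1$ estimates on an intermediate cube $\tilde K \subset \tilde K' \subsetneq \delta \tilde K$ yield
\begin{equation*}
\|\tilde u\|_{L^\infty(\tilde K')} + \|\nabla \tilde u\|_{L^\infty(\tilde K')} \leq C_1 \gamma^{1/2}.
\end{equation*}

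For the asymmetry step, let $y_0 \in \tfrac{1}{2}\tilde K$ be a nodal point of $\tilde u$. The strong maximum principle, applied to both $\tilde u$ and $-\tilde u$, ensures that $\tilde u$ takes both signs in every neighborhood of $y_0$ (otherwise $\tilde u \equiv 0$). To make this quantitative I would follow the Jakobson--Mangoubi / Colding--Minicozzi scheme: the gradient bound gives $|\tilde u(y)| \leq C_1 \gamma^{1/2}|y-y_0|$, so $\tilde u$ is small on a ball of radius $\sim \gamma^{-1/2}$ about $y_0$; combined with the lower $L^\infty$ bound $\sup_{\tilde K}|\tilde u| \geq 1$ coming from normalization, a majority--minority dichotomy on $\tilde K$ together with a reflection/comparison argument against the leading harmonic term of $\tilde u$ at $y_0$ (in the spirit of Bers' expansion) produces a subset of $\delta \tilde K$ of relative volume at least $C/\gamma^2$ on which $\tilde u > 0$. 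Unscaling back to $K_{i_0 j_0}$, the metric comparability translates the Euclidean volume ratio to the Riemannian one up to a fixed constant.

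The main obstacle is the quantitative part of this last step: upgrading the qualitative statement ``both signs are attained near $y_0$'' into the sharp-in-$\gamma$ volume bound $C/\gamma^2$. This amounts to tracking how the ratio $\|\tilde u\|_{L^\infty}/\|\tilde u\|_{L^2}$, effectively $\gamma^{1/2}$, enters the asymmetry estimate, and is precisely the content of Proposition~5.4 in \cite{Jacobson-Mangoubi}. I expect the key identity is an integration of $\tilde u_+^2$ against the rescaled equation $\Delta \tilde u + r^2\lambda \tilde u = 0$ on a small ball around $y_0$, combined with a Poincar\'e-type inequality on $\{\tilde u > 0\}$, so that the final volume bound emerges from the square of the $L^\infty/L^2$ ratio. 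The remaining pieces — the rescaling, the elliptic $L^\infty$ and gradient bounds, and the qualitative strong maximum principle — are routine; only the constant accounting in this final step is delicate.
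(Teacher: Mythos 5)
Your high-level plan — rescale to unit size, exploit the $\gamma$-good condition to get an $L^\infty$ bound of order $\gamma^{1/2}$ (after $L^2$ normalization), and then extract a volume lower bound for the positivity set from the vanishing of $\eigenf$ in $\frac{1}{2}K_{i_0 j_0}$ — is the right framework and matches the paper up to the elliptic estimate step. However, the key mechanism by which the paper turns the existence of a nodal point into the claimed asymmetry bound is missing from your proposal, and the substitutes you suggest do not produce it.

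The paper does not go through gradient bounds or a Bers-type expansion at the nodal point. Instead it invokes a quantitative mean value property at a zero (Lemma~5 of \cite{Colding-Minicozzi}, reproduced here as Lemma~\ref{lem:Generalized-Mean-Value}): if $\eigenf(q)=0$ then
\[
\left|\int_{B_r(q)}\eigenf\right|\le \tfrac13\int_{B_r(q)}|\eigenf|,
\]
so that $\int|\eigenf|\lesssim\int\eigenf^{+}$ on a comparable cube. This is what balances the positive and negative parts, and it is the single indispensable ingredient. Once one has it, Cauchy--Schwarz gives $\bigl(\int\eigenf^{+}\bigr)^2\le \Vol(\{\eigenf>0\})\int\eigenf^2$, and the $\gamma$-good condition together with the Li--Schoen $L^\infty$ bound provides the reverse-H\"older inequality $\int_{\delta K}\eigenf^2\lesssim \gamma^2 r^{-n}\bigl(\int|\eigenf|\bigr)^2$; cancellation yields $\Vol(\{\eigenf>0\})\gtrsim r^n/\gamma^2$. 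Your route never establishes that $\int\eigenf^{+}$ is comparable to $\int|\eigenf|$: knowing that $|\tilde u|$ is small near $y_0$ (from the gradient bound) says nothing about the sign balance, and the ``majority--minority dichotomy'' combined with a ``reflection/comparison against the leading harmonic term'' is precisely the gap you yourself flag as the main obstacle. The Bers-expansion heuristic lives at a scale depending on the vanishing order, which is not controlled here, so it cannot deliver a $\gamma$-uniform bound on its own. Relatedly, the $C^1$ estimate you derive is not needed; only the $L^\infty$ bound enters.

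In short, replace the speculative last paragraph by Lemma~\ref{lem:Generalized-Mean-Value} (the quantitative mean value principle at a zero) plus Cauchy--Schwarz; everything else in your sketch — the rescaling, the use of $\gamma$-goodness, and the elliptic $L^\infty$ bound — is sound and matches the paper's argument.
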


The same holds for the negativity set. Hence the asymmetry of $\Omega_\lambda$ in $\delta K_{i_0 j_0}$ is bounded below by the constant $C/\gamma^2 > 0$, which essentially depends on the good/bad growth condition and not on $\lambda$.

\begin{proof} (of Proposition)
	We denote by $ K_r(p) $ the cube of edge size $ r $ centered at $ p $, whose edges a parallel to the coordinate axes. We also denote by $ B_r(p) $ a metric ball (w.r.t the metric $ g $) of radius $ r $ centered at $ p $. Let us assume that $  K_r(p) := K_{i_0 j_0} $. By the metric comparability (\ref{eq:Metric-Comparability}), we have:
	\begin{equation}
		B_{\frac{r}{4}} \subseteq K_r(p) \subseteq B_{\sqrt{n}r}(p).
	\end{equation}
	
	Recall the following generalization of the mean value principle (Lemma 5, \cite{Colding-Minicozzi}):
	\begin{Lem} \label{lem:Generalized-Mean-Value}
		There exists $ R = R(M, g) > 0 $, such that if $ r \leq R $ and $ \eigenf(p) = 0 $, then
		\begin{equation}
		\left| \int_{B_r(p)} \eigenf \right| \leq \frac{1}{3} \int_{B_r(p)} |\eigenf|.
		\end{equation}
	\end{Lem}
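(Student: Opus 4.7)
The plan is to reduce the inequality on $(M,g)$ to the Euclidean case by a rescaling–compactness argument. The crucial observation is that on $\mathbb{R}^{n}$ the identity $\int_{B_r(p)}\eigenf=0$ holds \emph{exactly} whenever $\eigenf(p)=0$: the spherical mean $u(s):=|S_s(p)|^{-1}\int_{S_s(p)}\eigenf\,dS$ of a solution to $\Delta\eigenf+\lambda\eigenf=0$ satisfies the Bessel-type ODE
\begin{equation*}
u''(s)+\tfrac{n-1}{s}u'(s)+\lambda u(s)=0
\end{equation*}
with $u(0)=\eigenf(p)=0$ and $u'(0)=0$ (the latter from the classical expansion $u(s)=\eigenf(p)+\tfrac{s^{2}}{2n}\Delta\eigenf(p)+O(s^{4})$, since $\int_{S_s(p)}(x-p)\,dS=0$ by symmetry). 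Selecting the unique regular branch at the singular point forces $u\equiv 0$, hence $\int_{B_r(p)}\eigenf=\int_0^r|S_s|u(s)\,ds=0$ for every $r$ and every $\lambda$. So on flat space the stated inequality holds trivially with $0$ in place of $1/3$.

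Next, suppose the assertion fails on $(M,g)$. Extract sequences $p_k\in M$, eigenfunctions $\eigenf_k$ of eigenvalue $\lambda_k$, and radii $r_k\to 0$ with $\eigenf_k(p_k)=0$ and $|\int_{B_{r_k}(p_k)}\eigenf_k|>\tfrac{1}{3}\int_{B_{r_k}(p_k)}|\eigenf_k|$. Work in geodesic normal coordinates centred at $p_k$ and rescale to the unit ball via $v_k(x):=\eigenf_k(\exp_{p_k}(r_k x))/\|\eigenf_k\|_{L^{\infty}(B_{r_k}(p_k))}$ for $x\in B_1\subset\mathbb{R}^{n}$. Then $\|v_k\|_\infty=1$, $v_k(0)=0$, and $\Delta_{g_k}v_k=-\mu_k v_k$ on $B_1$ with $\mu_k:=\lambda_k r_k^{2}$, where the rescaled metrics $g_k$ converge to $\delta_{ij}$ in $C^{\infty}_{\mathrm{loc}}$. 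If $\mu_k$ remains bounded along a subsequence, Schauder estimates plus Arzelà-Ascoli produce a $C^{1}_{\mathrm{loc}}$ subsequential limit $v_{\infty}$ on $B_1$ solving $\Delta v_{\infty}+\mu_{\infty}v_{\infty}=0$, with $v_{\infty}(0)=0$ and $v_{\infty}\not\equiv 0$ (unique continuation ruling out $v_{\infty}\equiv 0$ on $B_{1/2}$). By the Euclidean warm-up $\int_{B_{1/2}}v_{\infty}=0$, whereas $\int_{B_{1/2}}|v_{\infty}|>0$; passing the violated ratio to the limit contradicts the bound $\tfrac{1}{3}$.

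The main obstacle is the high-frequency regime $\mu_k\to\infty$, in which $v_k$ oscillates on scale $\mu_k^{-1/2}\to 0$ and no useful strong limit exists. Here I would avoid rescaling and instead apply the divergence identity
\begin{equation*}
-\lambda\int_{B_r(p)}\eigenf=\int_{\partial B_r(p)}\partial_{\nu}\eigenf\,dS
\end{equation*}
together with a Bernstein-type gradient bound $\|\nabla\eigenf\|_{L^{\infty}(B_r(p))}\le C\sqrt{\lambda}\,\|\eigenf\|_{L^{\infty}(B_{2r}(p))}$, yielding $|\int_{B_r(p)}\eigenf|\le C\, r^{n-1}\lambda^{-1/2}\|\eigenf\|_{L^{\infty}(B_{2r}(p))}$. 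Coupling this with a reverse-Hölder-type lower bound $\int_{B_r(p)}|\eigenf|\gtrsim r^{n}\|\eigenf\|_{L^{\infty}(B_r(p))}$, available on wavelength scales $r\sqrt{\lambda}\gtrsim 1$ via standard doubling/frequency-function estimates for eigenfunctions, forces the ratio to be $\lesssim (r\sqrt{\lambda})^{-1}$, far below $1/3$. Combining the two regimes (bounded $\mu_k$ handled by compactness, large $\mu_k$ by this direct inequality) excludes any failing sequence and produces the desired $R=R(M,g)$.
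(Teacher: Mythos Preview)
First, note that the paper does not supply its own proof of this lemma: it is quoted verbatim from Colding--Minicozzi (Lemma~5 there), where it is proved by a short direct computation with the spherical mean on the manifold---no compactness or case splitting is involved. So any comparison is really with the Colding--Minicozzi argument.

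Your two-regime strategy has genuine gaps in \emph{both} regimes, and they stem from the same phenomenon: an eigenfunction $\eigenf$ can vanish at $p$ to order as high as $C\sqrt{\lambda}$. Concretely, take $(M,g)=S^{2}$, $p$ the north pole, and $\eigenf=\mathrm{Re}\,Y_{\ell}^{\ell}\sim(\sin\theta)^{\ell}\cos(\ell\phi)$, with $r_{k}\sim 1/\ell$ so that $\mu_{k}=\lambda_{k}r_{k}^{2}$ stays bounded. After your rescaling and normalization one obtains essentially $v_{k}(x)=|x|^{\ell}\cos(\ell\arg x)$ on $B_{1}$, which converges to $0$ uniformly on every compact subset of $B_{1}$. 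Thus $v_{\infty}\equiv 0$ and no contradiction arises; the appeal to ``unique continuation'' is misplaced, since unique continuation concerns a single solution vanishing on an open set, not the nontriviality of a subsequential limit. (This example does \emph{not} violate the lemma itself, since $\int_{B_{r}}\mathrm{Re}\,Y_{\ell}^{\ell}=0$ exactly; it only shows that your compactness scheme cannot detect the truth of the statement.)

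The high-frequency branch fails for the same reason. The claimed reverse H\"older inequality $\int_{B_{r}(p)}|\eigenf|\gtrsim r^{n}\|\eigenf\|_{L^{\infty}(B_{r}(p))}$ with a constant independent of $\lambda$ is \emph{not} a standard consequence of doubling or frequency estimates, and the $Y_{\ell}^{\ell}$ example again refutes it: on the ball $B_{r}$ at the pole one has $\|\eigenf\|_{L^{\infty}(B_{r})}\sim r^{\ell}$ while $\int_{B_{r}}|\eigenf|\sim r^{\ell+2}/(\ell+2)$, so the ratio is $\sim 1/\ell\to 0$. Even if one granted the reverse H\"older, your final bound contains the factor $\|\eigenf\|_{L^{\infty}(B_{2r})}/\|\eigenf\|_{L^{\infty}(B_{r})}$, which is precisely the doubling constant and can be as large as $e^{C\sqrt{\lambda}}$ by Donnelly--Fefferman; it is not controlled by $r\sqrt{\lambda}$ alone. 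In short, neither branch closes, and the argument would need a mechanism that is insensitive to high vanishing order---which is exactly what the direct Colding--Minicozzi computation provides.
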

	By assumption, there exists a point $ q \in \frac{1}{2} K_{i_0 j_0}, \eigenf (q) =0 $, so the lemma, in combination with metric comparability, implies that
	\begin{equation}
		\int_{K_r(q)} |\eigenf| \leq \int_{B_{\sqrt{n}r}(q)} |\eigenf| \leq 3 \int_{B_{\sqrt{n}r}(q)} \eigenf^+ \leq 3 \int_{K_{4\sqrt{n}r}(q)} \eigenf^+,
	\end{equation}
	where $ \eigenf^+, \eigenf^- $ respectively denote the positive and negative part of $ \eigenf $.

	Hence,
	\begin{multline}\label{eq:Inequality-Chain1}
		\frac{1}{9}\left( \int_{K_{2r}(q) } |\eigenf| \right)^2 \leq \left( \int_{K_{8\sqrt{n}r}(q)} \eigenf^+ \right)^2 \leq \\
\leq \Vol(K_{8\sqrt{n}r}(q) \cap \{ \eigenf > 0  \}) \int_{K_{16\sqrt{n}r}(q)} \eigenf^2, \\
	\end{multline}
	where we have used the Cauchy-Schwartz inequality.
	
	We estimate further the integral from the last expression:
	\begin{multline}\label{eq:Inequality-Chain2}
	\left( \int_{K_{16\sqrt{n}r}(q)} \eigenf^2 \right)^2 \leq
		\gamma^2 \left( \int_{K_{r}(p)} \eigenf^2 \right)^2 \leq \\ \leq \gamma^2 \left( \int_{K_{2r}(q)} |\eigenf| |\eigenf| \right)^2 \leq \gamma^2 \sup_{K_{2r}(q)} \eigenf^2 \left( \int_{K_{2r}(q)} |\eigenf| \right)^2.
	\end{multline}
	
	Note that, since $ r $ is comparable to wavelength, elliptic estimates (Theorem 1.2, \cite{Li-Schoen}) imply:
	\begin{equation}\label{eq:Inequality-Chain3}
		\sup_{K_{2r}(q)} \eigenf^2 \leq \sup_{B_{2\sqrt{n}r}(q)} \eigenf^2 \leq C_0 r^{-n} \int_{B_{4\sqrt{n}r}(q)} \eigenf^2 \leq C_0 r^{-n} \int_{K_{16\sqrt{n}r}(q)} \eigenf^2,
	\end{equation}
	where $ C_0 = C_0(M, g, \rho, n) $.

	Plugging (\ref{eq:Inequality-Chain3}) into (\ref{eq:Inequality-Chain2}), one gets
	\begin{equation}
		\int_{K_{16\sqrt{n}r}(q)} \eigenf^2 \leq C_0 \gamma^2 r^{-n} \left( \int_{K_{2r}(q)} |\eigenf| \right)^2
	\end{equation}
	and in combination with (\ref{eq:Inequality-Chain1}) this yields
	\begin{equation}
		\frac{r^n}{9C_0 \gamma^2} \leq \Vol(\{ \eigenf > 0  \} \cap K_{8\sqrt{n}r}(q) ) \leq \Vol(\{\eigenf > 0\} \cap \delta K_{i_0 j_0}).
	\end{equation}
	The statement of the proposition follows after dividing by $ \Vol( \delta K_{i_0 j_0})  \leq C_1 r^n$.
	\end{proof}
	
\begin{Rem}
	One may also exhibit a version of Lemma \ref{lem:Generalized-Mean-Value} for cubes, thus making some of the constants better. However, using balls and comparability as above suffices for our purposes.
\end{Rem}

To finish the proof of the main statement, we put together the latter observations.

Again, we consider an atlas and cube decomposition as above. Following \cite{Man1}, we fix the size of the small cube-grid 
\begin{equation}
	h := 8 \max_i r_i,
\end{equation}
where $r_i$ denotes the inner radius of $\Omega_\lambda$ in the chart $(U_i, \phi_i)$ with respect to the Euclidean metric.
	
We consider the cube $ K_{i_0 j_0} $, prescribed by Claim \ref{cl:The-Good-Cube}. The choice of $ h $ ensures that $ \eigenf (q) = 0 $ for some $ q \in \frac{1}{2} K_{i_0 j_0} $. Then the conditions of Proposition \ref{prop:Controlled-Asymmetry} are satisfied. This means that
\begin{equation}
	\Vol(\{\psi = 0\} \cap \delta K_{i_0 j_0}) \geq \Vol(\{\eigenf > 0\} \cap \delta K_{i_0 j_0}) \geq \frac{C}{\gamma^2} h^n.
\end{equation}

We plug the latter in the Poincare and capacity estimates (\ref{eq:Poincare-Inequalities}) and recall that (\ref{eq:The-Good-Cube-Rayleigh}) holds.
We get
\begin{equation}
	C \frac{1}{h^ 2} \left(\frac{1}{\gamma^2}\right)^{\frac{n-2}{n}}\leq \frac{{\kappa_\delta}}{\tau} \lambda_1(\Omega_\lambda).
\end{equation}
A rearrangement gives
\begin{equation}
	h \geq C \left[ \sqrt{\frac{\tau}{\kappa_\delta}} \left( \frac{1}{\gamma}\right)^{\frac{n-2}{n}} \right] \frac{1}{\sqrt{\lambda}}.
\end{equation}
The proof finishes by recalling that $ h \leq 8\inrad(\Omega_\lambda) $ by assumption.
\end{proof}

\section{Some further comments and corollaries} \label{sec:Comments}

Let us briefly explain the Corollaries \ref{cor:Fat-Nodal-Domain} and \ref{cor:Energy-Estimate}.

\begin{proof} [Proof of Corollary \ref{cor:Fat-Nodal-Domain}]

	Let us fix $ \gamma := 4 \kappa_\delta $. A simple summation argument, yields
	
	\begin{Cl} \label{cl:Nice-Nodal-Domain}
		There exists a nodal domain $\NiceNodal$, such that
		\begin{equation}
			 \int_{\Gamma \cap \NiceNodal} \eigenf^2 \geq 3 \int_{\Xi \cap \NiceNodal} \eigenf^2.
		\end{equation}
		In particular,
		\begin{equation}
			 \int_{\Gamma} (\psi^*)^2 \geq 3 \int_{\Xi} (\psi^*)^2,
		\end{equation}
		where $\psi^*$ is the function, which realizes $\lambda_1(\NiceNodal)$, extended by zero outside $\NiceNodal$.
	\end{Cl}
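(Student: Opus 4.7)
The plan is to deduce Claim \ref{cl:Nice-Nodal-Domain} from Lemma \ref{lem:L2-norm-lower-bound} via a simple pigeonhole argument over the nodal domains of $\eigenf$, together with the choice $\gamma = 4 \kappa_\delta$.

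First I would plug $\gamma = 4\kappa_\delta$ into Lemma \ref{lem:L2-norm-lower-bound} to obtain the global bound
\begin{equation*}
\int_{\Gamma} \eigenf^2 \geq 1 - \frac{\kappa_\delta}{\gamma} = \frac{3}{4},
\end{equation*}
and hence, since $\|\eigenf\|_{L^2(M)} = 1$, also $\int_{\Xi} \eigenf^2 \leq \tfrac{1}{4}$. In particular, globally on $M$ one has $\int_\Gamma \eigenf^2 \geq 3 \int_\Xi \eigenf^2$, so the task reduces to localizing this inequality to a single nodal domain.

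Next I would use the standard fact that the nodal set $\{\eigenf = 0\}$ has measure zero, so $M$ decomposes (up to a null set) as the disjoint union of the nodal domains $\{\Omega\}$ of $\eigenf$. This yields the orthogonal decompositions
\begin{equation*}
\int_{\Gamma} \eigenf^2 = \sum_{\Omega} \int_{\Gamma \cap \Omega} \eigenf^2, \qquad \int_{\Xi} \eigenf^2 = \sum_{\Omega} \int_{\Xi \cap \Omega} \eigenf^2.
\end{equation*}
Suppose towards a contradiction that for every nodal domain $\Omega$ one had $\int_{\Gamma \cap \Omega} \eigenf^2 < 3 \int_{\Xi \cap \Omega} \eigenf^2$. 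Summing over $\Omega$ and using the two identities above would give
\begin{equation*}
\tfrac{3}{4} \leq \int_{\Gamma} \eigenf^2 < 3 \int_{\Xi} \eigenf^2 \leq \tfrac{3}{4},
\end{equation*}
a contradiction. Hence at least one nodal domain $\NiceNodal$ satisfies the desired inequality, which is the first assertion of the claim.

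The "in particular" part is immediate: since $\psi^*$ coincides with $\eigenf$ on $\NiceNodal$ and vanishes identically off $\NiceNodal$, one has $\int_{\Gamma}(\psi^*)^2 = \int_{\Gamma \cap \NiceNodal} \eigenf^2$ and $\int_{\Xi}(\psi^*)^2 = \int_{\Xi \cap \NiceNodal} \eigenf^2$, so the second inequality is a direct rewriting of the first. There is no genuine obstacle here; the only thing worth being careful about is making sure that the constant $\gamma = 4\kappa_\delta$ is large enough to push Lemma \ref{lem:L2-norm-lower-bound} past the threshold needed for the pigeonhole (the factor $3$ in the claim is matched precisely by the $3/4$ vs.\ $1/4$ split produced by this choice of $\gamma$), which then feeds into Theorem \ref{th:Main-Theorem} via $\tau \geq 3/4$ to yield the wavelength inradius of Corollary \ref{cor:Fat-Nodal-Domain}.
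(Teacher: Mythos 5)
Your proof is correct and follows the same pigeonhole argument the paper uses: assume the contrary, sum over nodal domains, and contradict Lemma \ref{lem:L2-norm-lower-bound} (with $\gamma = 4\kappa_\delta$) together with $\|\eigenf\|_{L^2}=1$. Your write-up simply fills in the details the paper leaves implicit, including the measure-zero observation for the nodal set and the identification $\psi^* = \eigenf$ on $\NiceNodal$ for the ``in particular'' step.
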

	
	Indeed, assuming the contrary and summing over all nodal domains, one gets a contradiction with Lemma \ref{lem:L2-norm-lower-bound} and the fact that $\| \eigenf \|_{L^2} = 1$.
	
	Now, apply Theorem \ref{th:Main-Theorem} with $ \NiceNodal $.

\end{proof}

We now prove the energy inequality. The idea is just to tailor $ \gamma $ along $ \Omega_\lambda $.

\begin{proof} [Proof of Corollary \ref{cor:Energy-Estimate}]

	In the light of Lemma \ref{lem:L2-norm-lower-bound}, we just take
	\begin{equation}
		\gamma := \frac{4 \kappa_\delta}{\| \eigenf \|_{L^2(\Omega_\lambda)}^2},
	\end{equation}
	thus having
	\begin{equation}
		\int_{\Gamma} \eigenf^2 \geq 1 - \frac{\| \eigenf \|_{L^2(\Omega_\lambda)}^2}{4}.
	\end{equation}
	This ensures that $\Omega_\lambda$ satisfies the condition of Theorem \ref{th:Main-Theorem} with $ \tau = 1 / 4 $ and the prescribed $ \gamma $.
	So, the claim follows from Theorem \ref{th:Main-Theorem}.
	
\end{proof}

In particular,
\begin{equation}
	\left( \inrad(\Omega_\lambda) \sqrt{\lambda} \right)^{\frac{2n}{n-2}} \geq C \| \eigenf \|_{L^2(\Omega_\lambda)}
\end{equation}
and summing over all nodal domains yields
\begin{equation}
	\sum_{\Omega_\lambda} \inrad(\Omega_\lambda)^{\frac{2n}{n-2}} \geq \frac{C}{\lambda^{\frac{n}{2n-4}}},
\end{equation}
with the constant $C$ being better than the constant $C_1$, appearing in Theorem \ref{th:Main-Theorem}. This allows one to obtain an estimate on the generalized mean with exponent $ \frac{n}{n-2} $ of all the inner radii corresponding to different nodal domains.

Note that the main obstruction against the application of Theorem \ref{th:Main-Theorem} is the fact that one needs to know that the $L^2$-norm of $\eigenf$ over $\Omega_\lambda$ is mainly contained in good cubes and this should be uniform w.r.t. $\lambda $ (or at least conveniently controlled).

As further questions one might ask whether a dissipation of the bad cubes is to be expected in some special cases (e.g. the case of ergodic geodesic flow) - that is, is it true that a nodal domain should have a well-distributed $ L^2 $ norm in the sense of Theorem $ \ref{th:Main-Theorem} $?

A relaxed version of this question is, of course, a probabilistic statement of the kind - a significant amount of nodal domains should enjoy the property of having well-distributed $ L^2 $ norm.

\end{document}